\documentclass[11pt,reqno]{amsart}
\usepackage[T1]{fontenc}
\usepackage{lmodern,amsmath,amssymb,amsthm,mathtools,microtype}
\usepackage[letterpaper,margin=1in]{geometry}
\usepackage[hidelinks]{hyperref}
\ifdefined\SIMAReview
  \usepackage[mathlines]{lineno}
\fi
\newtheorem{theorem}{Theorem}[section]
\newtheorem{proposition}[theorem]{Proposition}
\newtheorem{lemma}[theorem]{Lemma}
\newtheorem{corollary}[theorem]{Corollary}
\theoremstyle{definition}
\newtheorem{definition}[theorem]{Definition}
\theoremstyle{remark}
\newtheorem{remark}[theorem]{Remark}
\DeclareMathOperator{\Rea}{Re}
\newcommand{\R}{\mathbb R}
\newcommand{\C}{\mathbb C}
\newcommand{\eps}{\varepsilon}
\newcommand{\dd}{\,dx}
\newcommand{\tht}{\eta}
\numberwithin{equation}{section}
\allowdisplaybreaks[1]
\newcommand{\AuthorNames}{Jeffrey Humpherys}
\newcommand{\AuthorRunningNames}{J. Humpherys}
\newcommand{\AuthorAffiliation}{Missouri University of Science and Technology, Rolla, MO, USA}
\newcommand{\AuthorEmails}{\email{jhumpherys@gmail.com}}
\newcommand{\FundingStatement}{}
\newcommand{\ConflictStatement}{The author declares no competing interests.}
\newcommand{\CodeRepository}{, and are archived at Zenodo \cite{HumLean}}
\hypersetup{pdftitle={Spectral stability for strong ideal-gas shock layers},
 pdfsubject={Spectral stability of strong ideal-gas shock layers},pdfauthor={}}
\title[Spectral stability of strong shock layers]
{Spectral stability for strong ideal-gas shock layers}
\author[\AuthorRunningNames]{\AuthorNames}
\address{\AuthorAffiliation}
\AuthorEmails
\date{October 15, 2026}
\subjclass[2020]{Primary 35B35, 76N06; Secondary 35L67, 35Q30}
\keywords{Compressible Navier--Stokes equations, ideal gas, viscous shock,
 spectral stability, energy estimate, Evans function, strong-shock limit}
\begin{document}
\begin{abstract}
We give a purely analytical proof of one-dimensional spectral stability of
sufficiently strong shock layers for the full ideal-gas Navier--Stokes
equations with constant viscosity and heat conduction. Earlier work reduced
this question to a limiting problem and gave numerical evidence of
stability, but no proof. We prove stability on an explicit region of gas
constants and transport ratios that includes air and monatomic gases. No
step of the argument relies on numerical computation, and the arguments,
taking the cited results of Humpherys, Lyng, and Zumbrun as hypotheses,
have also been verified in the Lean proof assistant.
\end{abstract}
\maketitle
\ifdefined\SIMAReview\linenumbers\fi

\section{Introduction and main results}\label{sec:introduction}
\subsection{Viscous shock layers}
Shock waves are among the most striking features of compressible flow. A
shock carries a large amount of energy across long distances as a coherent
front, and in the inviscid description of gas dynamics it appears as a
discontinuity separating two constant states. Once viscosity and heat
conduction are taken into account, the discontinuity is replaced by a thin
but smooth transition layer, the viscous shock layer. The structure of these
layers goes back to the work of Becker \cite{Becker}, who found an explicit
layer for a special relation between viscosity and heat conduction. For
fluids with a general equation of state, Bethe \cite{Bethe} identified the
conditions on the equation of state under which shocks behave as they do
in an ideal gas, and Weyl \cite{Weyl} studied the corresponding shock
layers. Gilbarg
\cite{Gilbarg} proved that every shock of finite amplitude in an ideal gas
has a viscous layer. In general the layer is not known explicitly, only
guaranteed to exist.

A shock layer is a traveling wave, and the basic question about it is
whether it is stable. That is, does a small perturbation of the layer
return, as time increases, to a translate of the layer? The question is
fundamental for two reasons. Stability explains why shock structure is
observed at all, since an unstable layer would not persist. It is also
closely tied to the small-viscosity limit, in which solutions of the
Navier--Stokes equations should approach solutions of the Euler equations
with smooth layers in place of their discontinuities \cite{HLZ,Zhandbook}.

For the inviscid problem the answer has been known for a long time.
Erpenbeck formulated the stability problem for shock fronts and studied its
spectral form \cite{Erpenbeck}, and Majda developed the corresponding
nonlinear theory \cite{Majda}. A consequence is what is often called
Majda's theorem. For a polytropic ideal gas, inviscid shocks are stable
regardless of their amplitude, in one and in several space dimensions.
The viscous question is the natural counterpart of that statement, and it
has proved much harder. The inviscid stability condition is expressed by a
Lopatinski determinant, which is computed explicitly from the two endstates
of the shock. The viscous condition depends instead on the entire profile, a heteroclinic
orbit of a nonlinear system of ordinary differential equations connecting
two equilibrium states.

\subsection{From nonlinear stability to an eigenvalue problem}
A large body of work over the last several decades has reduced the
nonlinear stability of viscous shock layers to a question about the
linearized equations. Pointwise Green-function estimates of Zumbrun and
Howard \cite{ZH} and of Mascia and Zumbrun \cite{MZ1}, together with the
nonlinear theory of \cite{MZ2}, show that for Lax shock profiles of
hyperbolic--parabolic systems, including the Navier--Stokes equations of
gas dynamics, an appropriate spectral stability condition implies
nonlinear orbital stability, without any restriction on the amplitude of
the shock. For many traveling waves, such as fronts of reaction--diffusion
equations, stability follows from a spectral gap. Apart from a simple
eigenvalue at the origin coming from translation of the wave, the spectrum
of the linearized operator lies in a half-plane strictly to the left of
the imaginary axis, and perturbations decay exponentially to a translate
of the wave. Shock layers do not have this structure. Because the
endstates are constant states of a system of conservation laws, the
essential spectrum of the linearized operator reaches the imaginary axis
at the origin, so the translational eigenvalue is not isolated and
perturbations decay only at algebraic rates. The spectral condition is
therefore stated through an Evans function, an analytic function of the
spectral parameter whose zeros detect solutions of the eigenvalue equations
with the correct behavior at both spatial infinities
\cite{AGJ,GZ,PZ,Zhandbook}. In this way the stability of a given family of
shock layers comes down to a concrete question. Does the Evans function
vanish anywhere in the closed right half-plane?

For weak shocks this question has a satisfactory answer. Energy methods of
Kawashima and Matsumura \cite{KM}, Goodman \cite{Goodman}, Liu \cite{Liu}, and Szepessy and Xin \cite{SX}
established stability of shock profiles of small amplitude, and Humpherys
and Zumbrun \cite{HuZ}, Plaza and Zumbrun \cite{PZ}, and Freistühler and
Szmolyan \cite{FS} proved spectral stability of small-amplitude profiles
for broad classes of systems, including the full Navier--Stokes equations.
All of these arguments use the small variation of the wave. For shocks of
large amplitude, analytical results have been rare. For isentropic gas dynamics, Matsumura and Nishihara
\cite{MN} treated adiabatic indices close to one, and He and Huang
\cite{HH} more recently proved nonlinear stability of isentropic shocks of
arbitrary amplitude, for a general pressure law that includes the
polytropic case and a viscosity depending smoothly on density. Neither of these large-amplitude results applies to the full ideal-gas
equations,
in which the thermal energy evolves by its own diffusive equation.

\subsection{The strong-shock limit}
The large-amplitude problem was instead taken up numerically. Efficient
methods for computing Evans functions \cite{Brin,BHRZ} made it possible to
decide the stability of individual waves, or of compact families of waves,
with high confidence. What remained difficult was the limit of infinitely
strong shocks, which no finite computation reaches. For isentropic gas
dynamics, Humpherys, Lafitte, and Zumbrun \cite{HLaZ} showed that after a
suitable rescaling the Evans function converges in the high Mach number
limit to the Evans function of a limiting problem, and they proved
stability of that limiting problem with an energy estimate. Together with
numerical computations at finite amplitude, this gave stability of
isentropic shocks independent of amplitude for a range of adiabatic
indices. The isentropic problem has since been treated analytically
\cite{HH}.

For the full ideal-gas equations, Humpherys, Lyng, and Zumbrun \cite{HLZ}
found that the structure is in one respect more favorable. They observed that, after a suitable rescaling,
strong shocks remain well behaved. The density stays bounded and bounded
away from zero for every shock strength, and only the upstream internal
energy degenerates, tending to zero. The strong-shock limit is
then a regular perturbation, and the Evans function is continuous in all
parameters up to and including the limit. A uniform bound on the size of
possible unstable eigenvalues completes the picture. All shocks, together
with the limiting ones, therefore form a compact family of boundary-value
problems. A systematic numerical study of this family in \cite{HLZ} found
stability for adiabatic indices between $1.2$ and $3$ and for a broad
range of ratios of heat conduction to viscosity, including the values for
air. The multidimensional analogue was studied numerically in
\cite{HLZmulti}.

These computations, although extensive and well conditioned, were not
proofs, and \cite{HLZ} said so explicitly, describing rigorous
verification as an important open problem. A computer-assisted proof in
interval arithmetic is possible in principle, as Barker showed for a
related problem \cite{Barker}, but it requires bounding numerically
computed profiles and Evans functions over a continuum of parameters. The
reduction of \cite{HLZ} also points to a second route. Because the Evans
function is continuous up to the strong-shock limit and unstable
eigenvalues are uniformly bounded, the stability of all sufficiently
strong shocks follows from a single statement about the limiting problem,
namely that the limiting Evans function has no zero in the closed right
half-plane. If that statement can be proved by analysis, the
one-dimensional stability of strong ideal-gas shocks becomes a theorem
with no numerical step.

\subsection{Contribution of this paper}
This paper carries out that program. We prove that the limiting Evans
function of \cite{HLZ} does not vanish in the closed right half-plane, on
an explicit region of gas constants and thermal transport ratios that
contains the values for air and the kinetic-theory value for monatomic
gases, and also along the line of transport ratios on which the limiting
profile is the strong-shock limit of Becker's profile. Combined with the
convergence and high-frequency results of \cite{HLZ}, this gives
spectral stability of all sufficiently strong finite shocks in these
regimes, uniformly on compact sets of parameters. No step of the argument
uses numerical integration of a profile, numerical evaluation of an Evans
function, or numerical verification of an inequality. Every bound is an
explicit analytical inequality valid on the whole parameter region.

The main obstacle to an energy estimate is the partial diffusion of the
equations. Momentum and thermal energy diffuse, but mass does not, and in
the integrated eigenvalue equations the thermal equation is coupled to the
derivative of the mass perturbation. That coupling cannot be controlled by
the diffusion. The key step is a profile-dependent change of the integrated
thermal variable that removes this coupling exactly, for every gas
constant and transport ratio. What remains is a weighted energy estimate
whose positivity reduces to scalar inequalities for the coefficients. On
Becker's line the profile is explicit and these inequalities are
elementary. Away from that line no formula for the profile is available,
and we replace it by an invariant interval for a normalized slope of the
profile, which is all the inequalities require. Zero frequency, where the
energy estimate degenerates, is handled by a separate argument based on the
variational equation of the profile and the outgoing condition continued
from positive frequencies.

Because the conclusion rests on a long chain of identities and inequalities,
we have also checked the arguments in the Lean proof assistant, using only
the Mathlib library. Results that lie outside the present scope of Mathlib
enter that check as explicit hypotheses. These are the existence of the
limiting profile and its endstate limits, the exponential decay of
admissible solutions and the continued outgoing space at zero frequency,
the saddle structure of the downstream rest point, and the continuity,
high-frequency bounds, and characterization of zeros of the integrated
Evans function established in \cite{HLZ}. The formal
verification is described in Section~\ref{sec:formal}.

\subsection{Equations and normalization}
We now set up the problem precisely. In Lagrangian coordinates, the
Navier--Stokes equations for a viscous, heat-conducting ideal gas read
\[
 v_t-u_x=0,\qquad
 u_t+p_x=\left(\frac{\mu_{\mathrm{eff}} u_x}{v}\right)_x,\qquad
 E_t+(pu)_x=\left(\frac{\mu_{\mathrm{eff}} uu_x}{v}\right)_x
 +\left(\frac{\kappa T_x}{v}\right)_x,
\]
with $E=e+u^2/2$ and the ideal-gas laws $p=\Gamma e/v$ and $e=c_vT$, where $\Gamma$ is the gas constant introduced below. A viscous shock profile is a
traveling-wave solution moving with speed $s$ and connecting constant
states at $x=\pm\infty$. The ideal-gas laws are invariant under the scaling
\[
 (x,t,v,u,T)\mapsto
 \left(-\epsilon sx,\ \epsilon s^2t,\ \frac v\epsilon,\
 -\frac u{\epsilon s},\ \frac{T}{\epsilon^2s^2}\right),
\]
with $\epsilon$ chosen so that the upstream specific volume is one
\cite[Section 2.2]{HLZ}. After this rescaling, a further dilation of $x$
and $t$ that normalizes the viscosity, and a change to the frame in which
the shock is stationary, the equations become
\begin{align}
 v_t+v_x-u_x&=0,\label{eq:pde1}\\
 u_t+u_x+p_x
   &=\left(\frac{u_x}{v}\right)_x,\label{eq:pde2}\\
 E_t+E_x+(pu)_x
   &=\left(\frac{u u_x+\nu e_x}{v}\right)_x,
 \qquad E=e+\frac{u^2}{2},\quad
 p=\frac{\Gamma e}{v}.\label{eq:pde3}
\end{align}
Here $v$ is specific volume, $u$ is velocity, and $e$ is specific
internal energy. These are the rescaled equations of
\cite[Section 2]{HLZ}. The gas constant is $\Gamma=\gamma-1>0$, where
$\gamma=c_p/c_v$ is the adiabatic index. The effective longitudinal
viscosity $\mu_{\mathrm{eff}}>0$ has been normalized to one, and the
remaining transport ratio is
\begin{equation}\label{eq:physicalnu}
 \nu=\frac{\kappa}{c_v\mu_{\mathrm{eff}}}>0,
\end{equation}
where $\kappa>0$ is the heat conductivity. Thus our $\nu$ is the ratio
$\nu/\mu$ in the unnormalized notation of \cite{HLZ}. The coefficients and
specific heats are constant throughout this paper.

For physical interpretation, let $\mu_s$ denote shear viscosity and
$\operatorname{Pr}=c_p\mu_s/\kappa$ the usual Prandtl number.
Under Stokes' hypothesis, $\mu_{\mathrm{eff}}=4\mu_s/3$, so
\begin{equation}\label{eq:Prandtl}
 \nu=\frac{3(\Gamma+1)}{4\operatorname{Pr}}.
\end{equation}
This is the conversion in \cite[Appendix A]{HLZ}. Stokes' hypothesis is
used only for this physical interpretation, not for the spectral theorems,
which are stated in terms of $\Gamma$ and $\nu$.

Write $\hat v$ and $\hat e$ for the profile specific volume and
internal energy. After the rescaling the shock speed is fixed, the
upstream state is $(1,0,e_-)$, and the profile velocity is $\hat u=\hat v-1$.
The Rankine--Hugoniot relations then determine every endstate quantity
from the downstream specific volume $v_+$ \cite[Section 2.4]{HLZ},
\begin{equation}\label{eq:endstates}
 v_*:=\frac{\Gamma}{\Gamma+2},\qquad
 a_*:=1-v_*=\frac{2}{\Gamma+2},\qquad
 e_- =\frac{(\Gamma+2)v_+-\Gamma}{2\Gamma(\Gamma+1)}.
\end{equation}
A physical nonconstant shock has $v_*<v_+<1$. The value $v_+=1$ is the
characteristic limit of vanishing amplitude, and the other end of the
range is the strong-shock limit $v_+\to v_*$. In terms of the upstream
Mach number $M$, the strong-shock limit is $M\to\infty$; see
\eqref{eq:Mach}. In this limit the upstream internal energy $e_-$ tends to
zero, while the specific volume stays between $v_*$ and one and the
downstream energy tends to $e_+=a_*^2/2$. The profile equations remain
smooth, indeed polynomial, at $e_-=0$ \cite[Remark 2.1]{HLZ}, so the
limiting endstates still define a traveling wave. We refer to it as the
\emph{limiting profile}. When $\nu=\Gamma+1$, it is the strong-shock limit
of the classical Becker profile.

\subsection{The spectral problem}
Linearizing \eqref{eq:pde1}--\eqref{eq:pde3} about a profile gives the eigenvalue problem
for the shock layer. As noted above, it has an eigenvalue at the origin
coming from translation, and its essential spectrum reaches the origin. We
therefore follow \cite{HLZ} and work with integrated perturbation variables. Because each equation is in
conservation form, the integrated perturbations satisfy a closed linear
system. Integration removes the translational eigenvalue while keeping
analytic dependence on $\lambda$, and for $\lambda\ne0$ it does not change
the eigenvalues \cite{ZH,HLZ}. Section~\ref{sec:spectral} derives this system and
describes its behavior at spatial infinity.

Write $D(\lambda;\Gamma,\nu,v_+)$ for the integrated Evans function of
the shock with parameters $\Gamma$, $\nu$, and $v_+$, with continuous
choices of normalization as in \cite{HLZ}, and define
\begin{equation}\label{eq:Dstar}
 D_*(\lambda;\Gamma,\nu)
 :=D(\lambda;\Gamma,\nu,v_*(\Gamma)).
\end{equation}
The incoming and outgoing solution spaces are continued from the
physical spectral problem. Their precise interpretation, especially at
zero frequency, is given in Section~\ref{sec:spectral}. Spectral stability
of a shock means that $D$ has no zero in the closed right half-plane
$\Rea\lambda\ge0$. By the results of \cite{HLZ} recalled above, the
spectral stability of all sufficiently strong shocks follows from the same
statement for $D_*$, and this is the statement we prove.

\subsection{Main results}
Our first result concerns the transport ratio for which the limiting
profile is explicit.

\begin{theorem}[Stability for $\nu=\Gamma+1$]\label{thm:line}
For every $0<\Gamma\le1$,
\[
 D_*(\lambda;\Gamma,\Gamma+1)\ne0
 \qquad\text{for all }\Rea\lambda\ge0.
\]
\end{theorem}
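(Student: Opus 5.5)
We will prove Theorem~\ref{thm:line} by an energy estimate for the integrated eigenvalue system of the limiting profile, treating $\lambda\ne0$ and $\lambda=0$ separately. First we write the limiting profile explicitly. For $\nu=\Gamma+1$ the Becker relation holds, so the total enthalpy is conserved along the layer, $\hat e+\hat u^2/2-\hat u=\text{const}$. With $e_-=0$ this gives $\hat e=\hat v(1-\hat v)/2\cdot(\text{explicit factor})$, precisely $\hat e=\frac{(1-\hat v)(\hat v+1)}{2}-(1-\hat v)$ rewritten in $\hat v$. The momentum profile equation then reduces to a scalar Riccati-type ODE $\hat v'=c\,(\hat v-1)(\hat v-v_*)$ with an explicit constant $c$. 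Hence $\hat v$ is a logistic function between $v_*$ and $1$, and $\hat e$, $\hat v'$ and $\hat e'$ are explicit rational functions of $\hat v$. In particular the normalized slope $\hat v'/((1-\hat v)(\hat v-v_*))$ is constant, which is what makes this line elementary.

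Next, for $\Rea\lambda\ge0$ with $\lambda\ne0$, we take a decaying solution $(\tilde v,\tilde u,\tilde E)$ of the integrated system of Section~\ref{sec:spectral}. The obstruction is the term in the integrated thermal equation proportional to $\tilde v'$, which neither diffusion term controls. We remove it by a profile-dependent substitution $\tilde E\mapsto w=\tilde E-\alpha(\hat v)\tilde u-\beta(\hat v)\tilde v$. We choose $\alpha,\beta$ (explicit in $\hat v$ for this $\nu$) so that the coefficient of $\tilde v'$ in the $w$ equation vanishes identically, using the mass equation $\lambda\tilde v+\tilde v'=\tilde u'$ to trade $\tilde v'$ for $\tilde u'$.

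We then multiply the mass, momentum and $w$ equations by $\bar{\tilde v}\,\phi_1$, $\bar{\tilde u}\,\phi_2$ and $\bar w\,\phi_3$, with positive weights depending on $\hat v$. We integrate over $\R$, take real parts, and integrate by parts, using exponential decay of admissible solutions. This yields
\[
 \Rea\lambda\,Q_0+\int\big(\text{Hermitian form in }(\tilde v,\tilde u,w,\tilde u',w')\big)\dd=0 .
\]
Here $Q_0>0$. The weights are chosen, by the Goodman/Matsumura--Nishihara trick, so that the cross terms $\tilde v\bar{\tilde u}'$ cancel and the remaining zeroth-order coefficients involve $\hat v'$ with a favorable sign, since $\hat v$ is monotone. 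Positivity reduces to finitely many scalar inequalities: $2\times2$ or $3\times3$ determinant conditions in the variable $s=\hat v\in(v_*,1)$ and the parameter $\Gamma\in(0,1]$. Because everything is polynomial in $(s,\Gamma)$, these are checked by clearing denominators and exhibiting sign-definite factorizations on $[v_*,1]\times(0,1]$. The conclusion is that the quadratic form is positive definite unless the solution vanishes, which rules out eigenvalues with $\Rea\lambda\ge0$, $\lambda\ne0$.

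The zero frequency is handled separately, since the energy estimate degenerates there. At $\lambda=0$ the integrated system is the variational equation of the profile ODE, integrated once. A zero of $D_*(0)$ means that a solution in the continued outgoing space at $+\infty$ matches the incoming space at $-\infty$. By the saddle structure of the downstream rest point and the explicit profile, such a solution would have to be a derivative of the profile family with respect to a parameter that is not available at fixed $\Gamma,\nu$. Equivalently, the Lopatinski-type condition $\det(\ldots)\ne0$ reduces to an explicit nonzero expression, which we verify directly.

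The main obstacle is the positivity step. Finding $\alpha,\beta$ and the weights $\phi_i$ so that the resulting polynomial inequalities actually hold on all of $0<\Gamma\le1$ is delicate, and the restriction $\Gamma\le1$ presumably enters exactly there. We would first try weights $\phi_i=\hat v^{k_i}$ or $\phi_i=\hat e^{-1}\hat v^{k}$, tuning the exponents. If those fail, we would take general affine-in-$\hat v$ weights.
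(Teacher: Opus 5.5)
Your outline has the paper's architecture: the explicit Becker profile, a profile-dependent thermal variable that removes the $v'$ coupling, a weighted identity in which the mass--momentum couplings form a total derivative, scalar bounds in $s$ and $\Gamma$, and a separate argument at $\lambda=0$. The gap is that the step carrying the proof, coercivity, is left open, and your guidance for it would not work as stated.

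\textbf{The weights.} With weights $\phi_1,\phi_2$ on the mass and momentum equations, the couplings $-\phi_1\Rea(u'\bar v)-\phi_2\frac{a}{\hat v}\Rea(v'\bar u)$ combine into $-\phi_1(\Rea u\bar v)'$ only if $\phi_2=\phi_1\hat v/a$. Integration by parts then leaves $\phi_1'\Rea(u\bar v)-\tfrac12\phi_1'|v|^2$, and nothing else controls $v$ when $\Rea\lambda=0$. A bounded momentum weight (a power of $\hat v$, or an affine function of $\hat v$) forces $\phi_1\to0$ at $-\infty$, so $\phi_1'>0$ somewhere and this $|v|^2$ term has the wrong sign. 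Hence $\phi_1$ must be nonincreasing, and $\phi_2=\phi_1\hat v/a$ must blow up as $a=1-\hat v\to0$ upstream. The paper takes $\phi_1=1$, $\phi_2=\hat v/a$; your family $\hat e^{-1}\hat v^k$ at least has the right singularity.

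\textbf{What that weight does and requires.} It produces the positive zeroth-order terms, through $d=a'/a=s/\gamma$ in the variables $y=(u,h)/\sqrt a$. It also needs an integrability check you omit. Since $a\sim a_*e^{(x-x_0)/\gamma}$, incoming solutions must decay faster than $e^{x/(2\gamma)}$. This holds because the incoming exponents have real parts at least $1$ and $1/\nu=1/\gamma$.

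\textbf{The thermal weight.} Every zeroth-order coefficient is $O(s)$ and vanishes at $+\infty$. So the coefficient $R$ of $\Rea(y_1'\bar y_2)$ must vanish there too; otherwise $Z_{22}\to-R_\infty^2(G^{-1})_{11}/4<0$. This fixes the downstream ratio of thermal to momentum weights at the symmetrizer value $1/\eta$, with $\eta=a_*v_*/\Gamma=2/(\Gamma+2)^2$. With these choices the coefficients become the explicit polynomials \eqref{eq:Gline}--\eqref{eq:Cline}, and Lemma~\ref{lem:bounds} gives $Z\ge(s/180)I_2$. Until the weights are fixed, there are no concrete inequalities to factor.

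\textbf{The substitution.} Take $\beta=0$ and add $c$ times the momentum equation, with $c=\nu\hat e'/(a\hat v)$, so that $w=\eps+cu$. Trading $v'$ through the mass equation instead leaves $-\lambda\frac{\nu\hat e'}{\hat v^2}v$ in the thermal equation. That cross term cannot be controlled uniformly for large $|\lambda|$ on the imaginary axis. A $\beta v$ component of $w$ also puts $v''$ into the thermal diffusion.

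\textbf{Zero frequency.} The obstruction at $\lambda=0$ is not a missing parameter. The integrated translational mode $u=v=A_0(\hat v-1)$, $\eps=A_0(\hat e-(\hat v-1)^2/2)$ decays upstream and tends downstream to $A_0j_*$, where $j_*=(-a_*,-a_*,0)^T$. The one-dimensional decaying space of the variational equation \eqref{eq:variational} shows that every candidate solution has this form. What you must show is that $j_*$ is not a limit permitted by the continued outgoing space. At $\lambda=0$ that space allows bounded limits in the span of the positive-speed eigenvectors of $\mathcal A_+$, selected by $\mu=-\lambda/\alpha+O(\lambda^2)$. The check is $\ell_-^Tj_*=-a_*(a_*/v_*+c_+)\ne0$. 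This is your Lopatinski-type condition, but you cannot quote the equivalence from the general theory here. Consistent splitting and genuine coupling fail at $-\infty$ in the strong-shock limit. Argue directly instead: the incoming space excludes the neutral transport branch, so $v'=u'$ integrates to $u=v$.

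\textbf{The profile.} Your Becker integral is wrong. It is $\gamma\hat e+\hat v^2/2=1/2$, that is, $\hat e=(1-\hat v^2)/(2\gamma)$, whereas your formula simplifies to $-(1-\hat v)^2/2<0$. The logistic form of $\hat v$ survives this correction.
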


The second result covers a two-parameter region, on which no formula for
the profile is available.

\begin{theorem}[An explicit transport region]\label{thm:transport}
Let
\begin{equation}\label{eq:transportregion}
 \mathcal T=\left\{(\Gamma,\nu):0<\Gamma\le\frac23,\quad
 1+\frac\Gamma2\le\nu\le1+\frac{3\Gamma}{2}\right\}.
\end{equation}
For every $(\Gamma,\nu)\in\mathcal T$,
\[
 D_*(\lambda;\Gamma,\nu)\ne0\qquad(\Rea\lambda\ge0).
\]
\end{theorem}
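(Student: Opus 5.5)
The plan is to prove Theorem~\ref{thm:transport} by the same energy method as Theorem~\ref{thm:line}. The explicit Becker profile is replaced by qualitative information on the limiting profile, extracted from the profile ODE. We write the integrated eigenvalue system for $(V,U,\mathcal E)$ about the limiting profile, where $e_-=0$, $v_*<\hat v<1$ and $\hat u=\hat v-1$. Integrating the momentum and energy profile equations once gives a first-order system for $(\hat v,\hat e)$. From it we aim to express $\hat e$, and the relevant derivatives $\hat v_x$, $\hat e_x$, in terms of $\hat v$ and a normalized slope, say $s=\hat v_x/[(\hat v-v_*)(1-\hat v)]$ or a similar ratio that equals an explicit constant on Becker's line $\nu=\Gamma+1$. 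The first key step is the profile-dependent change of the integrated thermal variable, $\mathcal E\mapsto W=\mathcal E-\alpha(x)V-\beta(x)U$ with coefficients built from $\hat v,\hat e$. The coefficients are chosen so that the term coupling the thermal equation to $V_x$ (equivalently, via the mass equation, to $\lambda V$ and $U_x$) cancels identically for all $\Gamma,\nu$. This is an algebraic identity, checked by direct substitution using the profile equations.

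Next, for $\Rea\lambda\ge0$ and $\lambda\ne0$, we multiply the transformed equations by $\bar V$, $\bar U$, $\bar W$ with positive weights $w_1(\hat v),w_2(\hat v),w_3(\hat v)$. We integrate over $\R$; boundary terms vanish by the exponential decay of admissible solutions. Then we take real parts and aim to show that the resulting Hermitian form is positive definite unless $(V,U,W)\equiv0$. After integrations by parts, positivity reduces to a finite list of pointwise scalar inequalities in $\hat v\in(v_*,1)$, $\Gamma$, $\nu$ and the slope quantity $s$. These are nonnegativity of the diagonal weights and discriminant conditions on the $2\times2$ blocks coupling $U_x$ with $W_x$ and $U$ with $V$. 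The weights are chosen as simple powers or affine functions of $\hat v$, guided by the Becker case.

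The step I expect to be the main obstacle is establishing an invariant interval $s\in[s_-(\Gamma,\nu),s_+(\Gamma,\nu)]$ along the limiting profile. We also need the scalar inequalities to hold uniformly for $s$ in that interval on all of $\mathcal T$. The interval is obtained by writing the ODE satisfied by $s$ as a function of $\hat v$ along the heteroclinic orbit. We check that the vector field points inward at $s=s_\pm$, using the endpoint behavior at the two rest points (the saddle structure at the downstream state fixes the limiting value of $s$ there) to start the orbit inside. The constraints $1+\Gamma/2\le\nu\le1+3\Gamma/2$ and $\Gamma\le 2/3$ should appear exactly as the conditions making the bounds $s_\pm$ compatible with the inequalities. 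I would first try $s_\pm$ affine in $\nu-(\Gamma+1)$, centered at the Becker value. The scalar inequalities are polynomial in $(\hat v,s,\Gamma,\nu)$ on a box, so they can be verified by monotonicity in $s$ and $\nu$ and by sign-definite factorizations.

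Finally, $\lambda=0$ is treated separately, since the energy estimate degenerates there. At $\lambda=0$ the integrated system is the variational equation of the profile ODE. A zero of $D_*$ would require an intersection of the continued outgoing space with the decaying space at $-\infty$. We would rule this out by showing that the only bounded solutions come from the profile derivative. That solution is excluded in the integrated variables because its integral has nonzero jump $v_*-1$, so it is not outgoing in the continued sense. Combining the two cases gives $D_*(\lambda;\Gamma,\nu)\ne0$ for $\Rea\lambda\ge0$.
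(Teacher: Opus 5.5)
Your overall architecture matches the paper's, and so does your invariant-interval argument: your slope is $-z/a_*$ in the notation of Lemma~\ref{lem:slopestrip}. Take $\alpha=0$ in your change of thermal variable. A $V$ component reintroduces $V''$ through the thermal diffusion, and hence $\lambda$-dependent terms; the paper uses $h=\eps+cu$. Two steps, however, fail as you describe them.

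\textbf{The energy estimate.} It cannot close with bounded weights and a $U$--$V$ discriminant block. The mass perturbation has no dissipation. On $\Rea\lambda=0$ the only $|V|^2$ term is $-\tfrac12 w_1'|V|^2$, so $w_1$ must be nonincreasing in $x$, which gives $w_1(-\infty)>0$. Also $|w_1'|\lesssim|\hat v'|\to0$. After one integration by parts, the $V$ couplings are $(m-w_1)\Rea(V\bar U')+m'\Rea(V\bar U)$ with $m=w_2a/\hat v$. If $w_2$ is bounded, as for any power or affine function of $\hat v$, then $m-w_1\to-w_1(-\infty)\ne0$ as $x\to-\infty$. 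That is exactly where your control of $V$ disappears, so no discriminant condition can hold there. The paper instead takes $w_1=1$ and $w_2=\hat v/a$ exactly. Then the mass couplings collapse to $-(\Rea(u\bar v))'$, $V$ leaves $Q$ altogether, and $V=0$ is recovered afterwards from the equations. This weight is singular at $-\infty$ in the strong-shock limit. That is why the paper rescales $y=(u,h)/\sqrt a$, uses thermal weight $\hat v/(a\tht)$, and must check that the flux vanishes. Admissible solutions decay like $e^{\omega x}$ with $\omega<1/\nu$, while $a^{-1}\lesssim e^{-mx}$ with $m=\max\{1/\nu,1/(\Gamma+1)\}$. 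The condition $m<2/\nu$ uses $\nu<2(\Gamma+1)$; exponential decay alone does not give it.

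\textbf{The zero-frequency step.} Your argument is the one the paper warns against in Remark~\ref{rem:spectrum}. The continued outgoing space at $\lambda=0$ contains bounded, nondecaying solutions. The two slow outgoing exponents are $-\lambda/\alpha+O(\lambda^2)$ with $\alpha\in\{1,1+c_+\}$. At $\lambda=0$ their limits are nonzero constants in the span of the corresponding eigenvectors of $\mathcal A_+$. A nonzero jump $v_*-1$ therefore excludes nothing. Consider the candidate $u=v=A_0(\hat v-1)$, $\eps=A_0(\hat e-(\hat v-1)^2/2)$, obtained from the variational equation using its downstream saddle structure. You must show that its limit $A_0j_*$, with $j_*=(-a_*,-a_*,0)^T$, lies outside that span. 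The paper does this with the left eigenvector $\ell_-^T=(a_*/v_*,c_+,-\Gamma/v_*)$ of the negative speed $1-c_+$: $\ell_-^T j_*=-a_*(a_*/v_*+c_+)\ne0$. This transversality condition is the actual content of $D_*(0)\ne0$.
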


The convergence and high-frequency results of \cite{HLZ} then transfer
these limiting statements to finite shocks.

\begin{corollary}[Finite shocks on the explicit region]\label{cor:transportfinite}
For every compact interval $I\subset(0,2/3]$, there exists $\delta_I>0$
such that the physical shocks satisfying
\[
 \Gamma\in I,\qquad 1+\Gamma/2\le\nu\le1+3\Gamma/2,\qquad
 0<v_+-v_*(\Gamma)<\delta_I
\]
satisfy the integrated Evans stability condition. The threshold is
uniform over this entire closed transport band.
\end{corollary}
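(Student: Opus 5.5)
The corollary follows from Theorem~\ref{thm:transport} by the compactness and continuity argument of \cite{HLZ}. The only real work is to make the threshold uniform over the two-parameter set. Fix a compact interval $I\subset(0,2/3]$ and set
\[
 K_I=\{(\Gamma,\nu):\Gamma\in I,\ 1+\Gamma/2\le\nu\le1+3\Gamma/2\},
\]
which is a compact subset of $\mathcal T$. I will use three inputs from \cite{HLZ}, stated in the paper as hypotheses.

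\begin{itemize}
\item[(i)] \emph{Continuity.} The integrated Evans function $D(\lambda;\Gamma,\nu,v_+)$ is continuous in $(\lambda,\Gamma,\nu,v_+)$ on $\{\Rea\lambda\ge0\}\times K_I\times[v_*(\Gamma),v_*(\Gamma)+\delta_0]$, up to and including $v_+=v_*(\Gamma)$. This uses the regularity of the rescaled profile equations at $e_-=0$ and the continuous choice of normalization.
\item[(ii)] \emph{High-frequency bound.} There is an $R$, uniform over this parameter set, such that $D\ne0$ whenever $\Rea\lambda\ge0$ and $|\lambda|\ge R$.
\item[(iii)] \emph{Characterization of zeros.} Zeros of $D$ in $\Rea\lambda\ge0$ are exactly the failures of the integrated Evans stability condition, with the continued outgoing space at $\lambda=0$.
\end{itemize}

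I argue by contradiction. Suppose no $\delta_I$ works. Then there are sequences $(\Gamma_n,\nu_n)\in K_I$, $v_{+,n}$ with $0<v_{+,n}-v_*(\Gamma_n)\to0$, and $\lambda_n$ with $\Rea\lambda_n\ge0$ and $D(\lambda_n;\Gamma_n,\nu_n,v_{+,n})=0$.

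\begin{itemize}
\item By (ii), $|\lambda_n|<R$.
\item The set $\{\Rea\lambda\ge0,\ |\lambda|\le R\}\times K_I$ is compact, so a subsequence converges to some $(\lambda_\infty,\Gamma_\infty,\nu_\infty)$ with $\Rea\lambda_\infty\ge0$ and $(\Gamma_\infty,\nu_\infty)\in K_I\subset\mathcal T$.
\item $v_*$ is continuous in $\Gamma$, so $v_{+,n}\to v_*(\Gamma_\infty)$.
\item By (i), $D_*(\lambda_\infty;\Gamma_\infty,\nu_\infty)=\lim_n D(\lambda_n;\Gamma_n,\nu_n,v_{+,n})=0$.
\end{itemize}

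This contradicts Theorem~\ref{thm:transport}. The closedness of the band is what puts the limit point back inside $\mathcal T$, and this is exactly why the threshold can be taken uniform over the closed band.

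The main obstacle is checking that (i) and (ii) from \cite{HLZ} really hold uniformly on the needed set. That set includes the closed boundary $\Rea\lambda=0$, in particular $\lambda=0$, and all of $K_I$ jointly with $v_+$. At $\lambda=0$ continuity must be understood with the continued outgoing space of Section~\ref{sec:spectral}, so that the limit of the zeros is a genuine zero of $D_*$ in the sense of Theorem~\ref{thm:transport}.

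I would first verify that the \cite{HLZ} estimates depend only on bounds for $\Gamma$, $\nu$, $1/\Gamma$, $1/\nu$ and on profile bounds that are uniform in $v_+$ near $v_*$. These hold on $K_I$ because $\Gamma$ is bounded away from $0$ there. If only pointwise-in-parameter continuity is available, I would instead cover $K_I$ by finitely many neighborhoods on which $|D|$ is bounded below on the compact $\lambda$-region, and take $\delta_I$ to be the minimum of the resulting local thresholds.
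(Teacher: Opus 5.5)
Your argument is correct and is essentially the paper's proof. The paper takes the common radius $\Lambda$ from Lemma~\ref{lem:highfrequency}, uses Theorem~\ref{thm:transport} and compactness to get $\min|D_*|>0$ over $\mathcal K_I\times K_\Lambda$, and then applies joint continuity through $\varsigma=0$ (including $\lambda=0$) to get a single $\delta_I$; this is the direct form of your sequential contradiction, and your final fallback paragraph is unnecessary because joint continuity on a compact set is already all your subsequence argument uses.
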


\begin{theorem}[Nearby transport parameters and finite shocks]\label{thm:finite}
Let $I\subset(0,1]$ be a compact interval. There exist
$\epsilon_I>0$ and $\delta_I>0$ such that
\begin{equation}\label{eq:parameterband}
 \Gamma\in I,\qquad |\nu-(\Gamma+1)|<\epsilon_I,\qquad
 0\le v_+-v_*(\Gamma)<\delta_I
\end{equation}
imply
\[
 D(\lambda;\Gamma,\nu,v_+)\ne0
 \qquad\text{for all }\Rea\lambda\ge0.
\]
The constants may be chosen so that all parameters in
\eqref{eq:parameterband} satisfy $\nu>0$ and $v_+<1$.
\end{theorem}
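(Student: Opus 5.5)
The proof is a compactness and continuity argument that builds on Theorem~\ref{thm:line} together with the cited results of \cite{HLZ}. Fix the compact interval $I\subset(0,1]$ and consider the compact curve $K=\{(\Gamma,\Gamma+1,v_*(\Gamma)):\Gamma\in I\}$ in parameter space. Theorem~\ref{thm:line} says that $D(\lambda;\Gamma,\Gamma+1,v_*(\Gamma))=D_*(\lambda;\Gamma,\Gamma+1)$ has no zero in $\Rea\lambda\ge0$ for every point of $K$. The task is to extend this to a neighborhood of $K$ in the closed region $\{v_+\ge v_*(\Gamma)\}$. Because that neighborhood contains the limiting endstate $v_+=v_*$, I will use the regular-perturbation structure of \cite{HLZ}. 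The profile equations are polynomial at $e_-=0$, so the profile, and with it the Evans function, depends continuously on $(\Gamma,\nu,v_+)$ up to and including $v_+=v_*$, as long as $\nu>0$ and $v_+<1$. Since $\Gamma+1\ge1$ and $v_*(\Gamma)\le1/3$ on $I$, choosing $\epsilon_I<1$ and $\delta_I<2/3$ already gives $\nu>0$ and $v_+<1$ throughout \eqref{eq:parameterband}, which is the final assertion of the statement.

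The first step is to reduce to a bounded set of frequencies. The high-frequency results of \cite{HLZ} give a uniform bound $R$: for parameters in a compact neighborhood $N$ of $K$, every zero of $D$ with $\Rea\lambda\ge0$ satisfies $|\lambda|\le R$. It then suffices to exclude zeros on the compact half-disk $B=\{\Rea\lambda\ge0,\ |\lambda|\le R\}$.

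The second step is the continuity argument on $K\times B$. By \cite{HLZ}, with continuous normalizations, $D$ is jointly continuous on $N\times B$, and this includes $\lambda=0$, where the outgoing space is the one continued from positive frequencies as described in Section~\ref{sec:spectral}. By Theorem~\ref{thm:line}, $|D|$ is positive on the compact set $K\times B$, so it has a positive minimum $m$ there. Uniform continuity on the compact set $N\times B$ then gives $\epsilon_I,\delta_I>0$ such that $|D(\lambda;\Gamma,\nu,v_+)-D(\lambda;\Gamma,\Gamma+1,v_*(\Gamma))|<m$ whenever \eqref{eq:parameterband} holds and $\lambda\in B$. For such parameters $D$ cannot vanish on $B$, and by the first step it cannot vanish anywhere else in $\Rea\lambda\ge0$.

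The main obstacle I expect is the uniformity of the hypotheses taken from \cite{HLZ}. The bound $R$ and the continuity of $D$ must hold uniformly on a neighborhood that includes the degenerate endpoint $v_+=v_*$ and transport ratios $\nu\ne\Gamma+1$, and the normalization of $D$ must be chosen continuously in all three parameters jointly, not in $\lambda$ alone. Zero frequency needs the most care. Joint continuity of $D$ at $\lambda=0$ depends on the continued outgoing space varying continuously with the parameters, which follows from the saddle structure of the downstream rest point persisting under perturbation. I would check this directly: the relevant eigenvalues of the linearization at the downstream state stay separated from the imaginary axis uniformly on $N$, so the stable and unstable subspaces, and the projections onto them, depend continuously on the parameters.
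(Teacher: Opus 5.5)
Your argument is essentially the paper's proof. It uses a common high-frequency radius from \cite{HLZ} on a compact parameter set that includes $v_+=v_*$, and a positive minimum of $|D_*(\lambda;\Gamma,\Gamma+1)|$ over $\Gamma\in I$ and the compact half-disk, supplied by Theorem~\ref{thm:line}. It then concludes by uniform continuity of the jointly continuous Evans function through the strong-shock limit, and your explicit choice $\epsilon_I<1$, $\delta_I<2/3$ correctly secures $\nu>0$ and $v_+<1$.

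One caution about the direct check in your last paragraph: at $\lambda=0$ the frozen downstream matrix has a triple zero eigenvalue (the constant modes). The continued outgoing space is therefore not a spectral subspace separated from the imaginary axis. Its continuity comes instead from the noncharacteristic slow-mode splitting $\mu=-\lambda/\alpha+O(\lambda^2)$ underlying \cite[Proposition~3.8, Remark~3.9]{HLZ}, which the paper simply cites. The saddle structure you invoke belongs to the profile ODE, and it is what Proposition~\ref{prop:zero} uses.
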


\subsection{Discussion of the results and method}
The explicit region in Theorem~\ref{thm:transport} has a specified width,
unlike the additional open neighborhood in Theorem~\ref{thm:finite}.
For example, its transport intervals at $\Gamma=2/5$ and $\Gamma=2/3$
are $[6/5,8/5]$ and $[4/3,2]$, respectively. Under the physical
convention in \eqref{eq:Prandtl}, the explicit region becomes
\begin{equation}\label{eq:physicalregion}
 \frac{3(1+\Gamma)}{4(1+3\Gamma/2)}
 \le\operatorname{Pr}\le
 \frac{3(1+\Gamma)}{4(1+\Gamma/2)}.
\end{equation}
At the diatomic value $\Gamma=2/5$, this gives
$21/32\le\operatorname{Pr}\le7/8$, and at the monatomic value
$\Gamma=2/3$, it gives $5/8\le\operatorname{Pr}\le15/16$.
In particular, the air ratio $\nu\approx1.47$ reported in
\cite[Appendix A]{HLZ} and the monatomic kinetic-theory value
$\operatorname{Pr}=2/3$, corresponding to $\nu=15/8$, lie in the
respective intervals; see also \cite[Appendix B]{HLZ}. These statements
concern the constant-coefficient ideal-gas model, not temperature-dependent
transport or additional physical effects. None of the results asserts
stability for every positive transport parameter. The finite-shock
thresholds are existential, and no uniform amplitude threshold is
asserted as $\Gamma\downarrow0$.

In terms of the integrated perturbations $(v,u,\eps)$, with $\eps$ the
integrated energy variable defined in Section~\ref{sec:spectral}, the
change of thermal variable is
\[
 h=\eps+c u,\qquad c=\frac{\nu \hat e'}{\hat v(1-\hat v)}.
\]
Adding $c$ times the momentum equation to the thermal equation cancels the
mass derivative. With suitable weights, the remaining mass coupling is
a spatial derivative and contributes no term to the integrated energy
balance. This cancellation holds for every positive $\Gamma,\nu$.
The parameter restrictions enter only in the subsequent positivity
estimates.

When $\nu=\Gamma+1$, the limiting profile is the strong-shock limit of
the Becker profile \cite{Becker,Johnson}. By \eqref{eq:Prandtl},
this is the case $\operatorname{Pr}=3/4$ under Stokes' hypothesis.
Its explicit form provides a useful reference family, and the energy
estimate follows from elementary bounds on its coefficients. For the
region $\mathcal T$, the invariant interval for the normalized profile
slope supplies the corresponding bounds. In both cases, the estimate
excludes every nonzero spectral parameter in the closed right half-plane,
and the variational argument excludes zero. The passage to finite shocks
uses Evans-function convergence and compactness, without numerical
evaluation of an Evans function or a stability margin. An optional
parameter-path continuation argument is recorded in
Appendix~\ref{sec:continuation}. It is not needed for the direct limiting
stability proof.

\begin{remark}[Meaning of spectral stability]\label{rem:spectrum}
The theorems state the integrated Evans stability condition. The
translational eigenvalue of the unintegrated problem is not an Evans zero
that must be excluded by the nonzero-frequency energy estimate. In the
strong-shock limit, the upstream equations have a neutral transport branch,
so the conclusion is not a spectral-gap assertion for the limiting
operator. At $\lambda=0$, the continued outgoing space contains specified
bounded modes, and ruling out only a two-sided decaying kernel would not
prove the theorem.
\end{remark}

\subsection{Organization of the paper}
Section~\ref{sec:profile} constructs the limiting profile for
$\nu=\Gamma+1$ and proves the slope enclosure for general transport
ratios. Section~\ref{sec:spectral} derives the integrated eigenvalue
equations and the Evans function, and Sections~\ref{sec:thermal}
and~\ref{sec:energy} give the change of thermal variable and the energy
identity. Section~\ref{sec:coercivity} treats $\nu=\Gamma+1$, and
Section~\ref{sec:bandestimates} proves the estimates on the explicit
transport region. Sections~\ref{sec:admissible} and~\ref{sec:zero} justify
admissibility and establish limiting spectral stability.
Section~\ref{sec:finite} passes to sufficiently strong finite shocks,
and Section~\ref{sec:formal} describes the formal verification.
Appendix~\ref{app:identity} records the full divergence identity, and
Appendix~\ref{sec:continuation} gives the alternative Evans continuation
argument.

\section{The limiting profile}\label{sec:profile}
Integrating the stationary equations from the upstream state gives
$\hat u=\hat v-1$ and
\begin{align}
 \hat v'&=\hat v(\hat v-1)+\Gamma(\hat e-\hat v e_-),\label{eq:finiteprofile1}\\
 \hat e'&=\frac{\hat v}\nu\left(\hat e-e_--\frac{(\hat v-1)^2}{2}
                      +\Gamma e_-(\hat v-1)\right).\label{eq:finiteprofile2}
\end{align}
At $e_-=0$, these reduce to
\begin{equation}\label{eq:profile}
 \hat v'=\hat v(\hat v-1)+\Gamma \hat e,\qquad
 \hat e'=\frac{\hat v}\nu\left(\hat e-\frac{(\hat v-1)^2}{2}\right).
\end{equation}
The boundary conditions are
\[
 (\hat v,\hat e)(-\infty)=(1,0),\qquad
 (\hat v,\hat e)(+\infty)=(v_*,a_*^2/2).
\]
The existence, uniqueness up to translation, and exponential endstate
convergence of the profile family up to the strong-shock limit follow from
\cite[Section 2.5]{HLZ}. For the transport relation in
Theorem~\ref{thm:line}, the strong-shock limit of the Becker profile
\cite{Becker} has the following explicit form in Lagrangian coordinates.
For a modern derivation of the Becker family, see \cite[Section 3.1]{Johnson}.

\begin{lemma}[Explicit limiting profile]\label{lem:profile}
Suppose $\Gamma>0$ and $\nu=\Gamma+1$. Put $\gamma=\Gamma+1$.
For an arbitrary translation $x_0\in\R$, define
\begin{equation}\label{eq:explicitprofile}
 s(x)=\frac1{1+\exp((x-x_0)/\gamma)},\qquad
 \hat v=\frac{\Gamma+2s}{\Gamma+2},\qquad
 \hat e=\frac{1-\hat v^2}{2\gamma}.
\end{equation}
These functions solve \eqref{eq:profile} with the stated boundary
conditions. With $a=1-\hat v=a_*(1-s)$, they satisfy
\begin{equation}\label{eq:profileidentities}
 s'=-\frac{s(1-s)}\gamma,\qquad
 \hat v'=-\frac{as}\gamma,\qquad
 \hat e-\frac{a^2}{2}=\frac{as}\gamma.
\end{equation}
In particular, $0<s<1$, $a>0$, and $\hat v'<0$ at every finite $x$.
\end{lemma}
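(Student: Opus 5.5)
This is a direct verification. Throughout, write $\gamma=\Gamma+1=\nu$ and $a=1-\hat v$. The plan is to first establish the three identities in \eqref{eq:profileidentities}, and then substitute them into \eqref{eq:profile}.

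\emph{The logistic factor.} Since $s=(1+E)^{-1}$ with $E=\exp((x-x_0)/\gamma)$, we have
\[
s'=-\frac{E}{\gamma(1+E)^2}.
\]
Because $E/(1+E)=1-s$, this is $s'=-s(1-s)/\gamma$. Next,
\[
a=1-\frac{\Gamma+2s}{\Gamma+2}=\frac{2(1-s)}{\Gamma+2}=a_*(1-s),
\]
so $\hat v'=\frac{2}{\Gamma+2}s'=-a_*s(1-s)/\gamma=-as/\gamma$.

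\emph{The energy identity.} From $\hat e=(1-\hat v^2)/(2\gamma)=a(2-a)/(2\gamma)$ we get
\[
\hat e-\frac{a^2}{2}=\frac{a}{2\gamma}\bigl(2-a-\gamma a\bigr)=\frac{a}{2\gamma}\bigl(2-(\Gamma+2)a\bigr).
\]
Since $(\Gamma+2)a=2(1-s)$, the bracket equals $2s$. This gives $\hat e-a^2/2=as/\gamma$, which is the third identity.

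\emph{The first profile equation.} We need $\hat v(\hat v-1)+\Gamma\hat e=-as/\gamma$. The left side is
\[
-a(1-a)+\frac{\Gamma a(2-a)}{2\gamma}=\frac{a}{2\gamma}\bigl(-2\gamma+2\gamma a+2\Gamma-\Gamma a\bigr)=\frac{a}{2\gamma}\bigl((\Gamma+2)a-2\bigr).
\]
This uses $2\Gamma-2\gamma=-2$ and $2\gamma-\Gamma=\Gamma+2$. Since $(\Gamma+2)a-2=-2s$, the left side is $-as/\gamma$, as required.

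\emph{The second profile equation.} Differentiating $\hat e=(1-\hat v^2)/(2\gamma)$ gives $\hat e'=-\hat v\hat v'/\gamma=\hat v as/\gamma^2$. The right side of \eqref{eq:profile} with $\nu=\gamma$ is $(\hat v/\gamma)(\hat e-a^2/2)$. By the third identity this equals $\hat v as/\gamma^2$, so the two sides agree. This is the one place where $\nu=\Gamma+1$ is used. It is essentially the observation that the Becker relation gives the energy integral $\hat e=(1-\hat v^2)/(2\gamma)$, and it is the only step that needs care with the algebra.

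\emph{Boundary conditions and signs.} As $x\to-\infty$ we have $s\to1$, so $\hat v\to1$ and $\hat e\to0$. As $x\to+\infty$ we have $s\to0$, so $\hat v\to\Gamma/(\Gamma+2)=v_*$ and $\hat e\to(1-v_*^2)/(2\gamma)$. Since
\[
1-v_*^2=a_*(1+v_*)=a_*\cdot\frac{2(\Gamma+1)}{\Gamma+2}=a_*^2\gamma,
\]
the downstream limit of $\hat e$ is $a_*^2/2$, matching the stated boundary condition. Finally, $0<s<1$ holds for every finite $x$ because $E>0$. Hence $a=a_*(1-s)>0$ and $\hat v'=-as/\gamma<0$.
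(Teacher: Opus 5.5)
Your proposal is correct and takes the same approach as the paper: differentiate directly to obtain \eqref{eq:profileidentities} and $\hat e'=-\hat v\hat v'/\gamma$, use these with $\nu=\gamma$ to verify \eqref{eq:profile}, and read off the endstates from $s\to1$ and $s\to0$. You carry out in full the algebra that the paper leaves implicit, including the check $1-v_*^2=\gamma a_*^2$ that gives the downstream energy $a_*^2/2$.
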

\begin{proof}
Direct differentiation gives \eqref{eq:profileidentities} and
$\hat e'=-\hat v\hat v'/\gamma$. These identities imply \eqref{eq:profile} when
$\nu=\gamma$. The endstates follow by taking $s\to1$ and $s\to0$ in
\eqref{eq:explicitprofile}.
\end{proof}

We use $a=1-\hat v$ and $s=(\hat v-v_*)/a_*$ throughout the limiting-profile
analysis. Thus $a$ is the local volume drop and $s$ is a dimensionless
profile coordinate; neither denotes a spectral unknown. From
\eqref{eq:explicitprofile},
\begin{equation}\label{eq:profiledecay}
 a(x)\sim a_*\,e^{(x-x_0)/\gamma}\quad(x\to-\infty),\qquad
 s(x)\sim e^{-(x-x_0)/\gamma}\quad(x\to+\infty).
\end{equation}
Here $\sim$ means that the ratio of the two sides tends to one.

\subsection{A slope enclosure for general transport parameters}
The explicit relation between $\hat e$ and $\hat v$ is not needed away from the
reference line. A bound on the profile slope will suffice.

\begin{lemma}[Invariant slope interval]\label{lem:slopestrip}
For $\Gamma>0$ and $\nu>0$, the limiting profile satisfies
\begin{equation}\label{eq:zstrip}
 z:=\frac{-\hat v'}{as}>0,\qquad
 \min\left\{\frac1{\Gamma+1},\frac1\nu\right\}
 \le z\le
 \max\left\{\frac1{\Gamma+1},\frac1\nu\right\}.
\end{equation}
In particular $\hat v'<0$ on the nonconstant profile. In the region
\eqref{eq:transportregion}, writing $g=\Gamma$, $n=\nu$,
\begin{equation}\label{eq:ktstrip}
 k=\frac{n-1}{g},\qquad t=\frac{1-z}{gz},
\end{equation}
one has
\[
 \frac12\le k\le\frac32,\qquad
 \min\{1,k\}\le t\le\max\{1,k\},\qquad
 z=\frac1{1+gt}.
\]
\end{lemma}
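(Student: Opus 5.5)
The plan is to rewrite $z$ through a single auxiliary quantity, derive a scalar Riccati-type equation for that quantity along the profile, and show that the two ends of the claimed interval are barriers the profile cannot cross.

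\emph{Step 1: rewrite $z$.} Put $q:=\hat e-a^2/2$, with $a=1-\hat v$ and $s=(\hat v-v_*)/a_*$ as before, so that $a=a_*(1-s)$. The first equation in \eqref{eq:profile} gives
\[
-\hat v'=\hat v a-\Gamma\hat e=a\Bigl(1-\tfrac{\Gamma+2}{2}a\Bigr)-\Gamma q=as-\Gamma q .
\]
Hence $z=1-\Gamma r$ with $r:=q/(as)$, so the claim \eqref{eq:zstrip} is equivalent to $r$ lying between $r_\gamma:=1/(\Gamma+1)$ and $r_\nu:=(1-1/\nu)/\Gamma$. On the line $\nu=\Gamma+1$, Lemma~\ref{lem:profile} gives $r\equiv r_\gamma$.

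\emph{Step 2: an equation for $r$.} The second equation in \eqref{eq:profile} gives $q'=\hat v q/\nu+a\hat v'$. From $a'=-\hat v'$ and $s'=\hat v'/a_*$ one gets $(as)'=as\,z(2s-1)$. Therefore, on $0<s<1$,
\[
r'=\frac{\hat v}{\nu}r-a(1-\Gamma r)-z(2s-1)r,\qquad z=1-\Gamma r .
\]
This is a scalar equation for $r$ whose coefficients are the functions $\hat v,a,s$ of $x$. Evaluating at $r=r_\gamma$, and using $a(\Gamma+2)=2(1-s)$, the terms collapse to
\[
r'=\frac{\hat v}{\Gamma+1}\Bigl(\frac1\nu-\frac1{\Gamma+1}\Bigr).
\]
I will do the analogous evaluation at $r=r_\nu$, where $z=1/\nu$, and expect a factor of the same sign times $\bigl(\tfrac1\nu-\tfrac1{\Gamma+1}\bigr)$. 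The expected conclusion is that both ends of the interval repel forward in $x$: whenever $r$ sits at an endpoint, $r'$ points outward.

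\emph{Step 3: barrier argument.} Suppose $r$ reaches an endpoint at some $x_1$. By repulsion it then stays strictly outside for all $x>x_1$, and the outward drift persists. I would compare $r$ with a constant-coefficient Riccati equation near $x=+\infty$, where $s\to0$, $a\to a_*$, and $\hat v\to v_*$. The exponential convergence and saddle structure from \cite[Section 2.5]{HLZ} should then give a contradiction: either $r$ blows up, or it leaves the basin of the downstream limit $\lim_{x\to+\infty}r$.

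For this I need that the limits of $r$ at $\pm\infty$ exist and lie inside the closed interval. I would compute them from the eigenvector directions of \eqref{eq:profile} at the two rest points, via the ratio $q/(as)$ along the unstable manifold at $(1,0)$ and the stable manifold at $(v_*,a_*^2/2)$. The degenerate case $\nu=\Gamma+1$ is already covered by Lemma~\ref{lem:profile}. Once $r$ is confined to the interval, $z$ is bounded below by $\min\{1/(\Gamma+1),1/\nu\}>0$, and $\hat v'=-asz<0$ follows.

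\emph{Step 4: algebra in $\mathcal T$.} With $g=\Gamma$ and $n=\nu$, the constraints defining $\mathcal T$ give $\tfrac12\le k\le\tfrac32$ immediately. Inverting $t=(1-z)/(gz)$ gives $z=1/(1+gt)$, so $z=1/(1+g)$ corresponds to $t=1$ and $z=1/n=1/(1+gk)$ corresponds to $t=k$. Since $t$ is decreasing in $z$, the bounds on $z$ become $\min\{1,k\}\le t\le\max\{1,k\}$.

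\emph{Main obstacle.} I expect the hardest part to be Step~3. It needs a careful endpoint check at $r_\nu$, where the sign is not as transparent as at $r_\gamma$. It also needs the asymptotic values of $r$ at both infinities, where $s\to0$ or $a\to0$ makes $r=q/(as)$ a $0/0$ ratio, to be shown to lie in the interval and not on its wrong side. If forward repulsion fails at one endpoint, I would instead run the barrier argument backward from $x=-\infty$, using whichever direction makes that endpoint attracting.
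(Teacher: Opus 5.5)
Your plan is the paper's argument in the variable $r=(1-z)/\Gamma$, run forward in $x$ instead of backward. Since $z'=-\Gamma r'$, your endpoint values of $r'$ are $-1/\Gamma$ times the paper's $f(s,z_0)$ and $f(s,z_1)$. A contradiction at $+\infty$ is the contrapositive of the paper's statement that the stable branch from the interior point $(0,z^*)$ stays in the strip under reverse flow. Your expectation at $r_\nu$ is off, however. The computation gives $r'=a(\nu-\Gamma-1)/(\Gamma\nu^2)=-\frac{(\Gamma+1)a}{\Gamma\nu}\bigl(\frac1\nu-\frac1{\Gamma+1}\bigr)$, so the factor has the \emph{opposite} sign to the one at $r_\gamma$. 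That is exactly what repulsion at both ends requires, because $r_\nu-r_\gamma=(\nu-\Gamma-1)/(\Gamma\nu(\Gamma+1))$. With a factor of the same sign, $r_\nu$ would be attracting. Both outward signs are strict for $0<s<1$, since $\hat v>0$ and $a>0$ there.

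Step 3 needs two corrections, and they make the Riccati comparison unnecessary. First, you need $r^*=\lim_{x\to+\infty}r$ to lie in the \emph{open} interval. A limit in the closed interval does not exclude $r$ approaching an endpoint from outside. Once $r^*$ is interior, remaining outside for all $x>x_1$ is itself the contradiction. Interiority follows from your formulas at $s=0$: the quadratic $r'|_{s=0}$ takes opposite signs at $r_\gamma$ and $r_\nu$. So $r^*$ is its root between them, because the other root corresponds to $z<0$; this is the paper's $f_0(z_0)f_0(z_1)<0$. The upstream limit is not needed, and it is not interior. For $\nu>1$ the profile leaves $(1,0)$ along the slow eigendirection and $r\to r_\nu$, so your backward fallback would start on the boundary.

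Second, $r=q/(as)$ is defined only where $0<s<1$, yet you use it on all of $\R$, which is part of what is being proved. You can run the barrier on the maximal interval $(x_-,\infty)$ on which $0<s<1$, given that the profile enters the downstream saddle from the side $\hat v>v_*$. The identity $\bigl(\log\frac{s}{1-s}\bigr)'=-z$, with $z$ bounded, then forces $x_-=-\infty$. Alternatively, do as the paper does: construct the stable branch into $s>0$ in the $(s,z)$ plane, show it stays in the strip under reverse flow, reconstruct $(\hat v,\hat e)$, and identify the result with the profile.
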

\begin{proof}
Eliminating $\hat e$ from the profile equations gives
\[
 s'=-s(1-s)z,\qquad z'=f(s,z),
\]
where
\begin{align*}
 f(s,z)&=(1-s)f_0(z)+sf_1(z),\\
 f_0(z)&=z^2+
 \left(\frac{\Gamma-2}{\Gamma+2}
       +\frac{\Gamma}{\nu(\Gamma+2)}\right)z
       -\frac{\Gamma}{\nu(\Gamma+2)},\\
 f_1(z)&=-(z-1)(z-1/\nu).
\end{align*}
The polynomial $f_0$ has one positive root $r$, and $f_0'(r)>0$.
At $(s,z)=(0,r)$ the spatial eigenvalues are $-r$ and $f_0'(r)$.
Thus the downstream profile is its one-dimensional stable branch into
$s>0$. Set $z_0=1/(\Gamma+1)$ and $z_1=1/\nu$. Direct evaluation gives
\[
 f_0(z_0)=\frac{\Gamma^2(\nu-\Gamma-1)}
 {\nu(\Gamma+1)^2(\Gamma+2)},\qquad
 f_0(z_1)=\frac{2(\Gamma+1-\nu)}{\nu^2(\Gamma+2)}.
\]
These values place $r$ between $z_0$ and $z_1$. If $\nu<\Gamma+1$,
then $z_0<z_1$, $f(s,z_0)\le0$, and $f(s,z_1)\ge0$ for
$0\le s\le1$. In reverse spatial time, both horizontal boundaries
therefore point into $[z_0,z_1]$. If $\nu>\Gamma+1$, the interval
and the signs reverse, with the same conclusion. At equality the interval
collapses to the invariant solution $z=1/(\Gamma+1)$.

For unequal endpoints, the stable branch starts in the interior of the
interval and remains there under reverse flow. Since $z$ has a positive
lower bound, $s$ increases toward one in reverse time and reaches neither
endpoint in finite time. Reconstruction gives
\[
 \hat v=v_*+a_*s,\qquad \hat e=\frac a\Gamma(\hat v-sz).
\]
As $s\to1$, this tends to $(1,0)$; as $s\to0$, it tends to the
required downstream state. Thus the branch is the limiting profile and
satisfies \eqref{eq:zstrip}. Finally, $z\mapsto(1-z)/(gz)$ is
decreasing. Its values at $z_0,z_1$ are $1,k$, proving
\eqref{eq:ktstrip}.
\end{proof}

\section{The integrated spectral problem}\label{sec:spectral}
This section sets up the eigenvalue problem studied in the rest of the
paper; see \cite{HLZ} for details.

\subsection{Why integrated coordinates}
Linearizing \eqref{eq:pde1}--\eqref{eq:pde3} about a profile gives a
linear operator $L$ acting on perturbations of $(v,u,E)$, and the spectral
problem is $L w=\lambda w$. Two features of this problem rule out a
spectral gap. Since every translate of a profile is again a profile, the
derivative of the profile satisfies $L\hat U'=0$, so $\lambda=0$ is an
eigenvalue. Since the endstates are constant states of a system of
conservation laws, the essential spectrum of $L$ reaches the imaginary
axis at the origin \cite{ZH,Zhandbook}. Stability is therefore expressed
through an Evans function rather than through the location of the
spectrum alone.

The equations are in conservation form, and this allows one to pass to
integrated variables. If $(v,u,E)$ is a perturbation, put
\[
 \tilde v(x)=\int_{-\infty}^x v(y)\,dy,\qquad
 \tilde u(x)=\int_{-\infty}^x u(y)\,dy,\qquad
 \tilde E(x)=\int_{-\infty}^x E(y)\,dy .
\]
Integrating each equation from $-\infty$ to $x$ gives a closed linear
system for the integrated perturbations. For $\lambda\ne0$, an eigenfunction
of $L$ integrates to an eigenfunction of the integrated operator, and an
eigenfunction of the integrated operator differentiates to one of $L$, so
the two problems have the same nonzero eigenvalues. At $\lambda=0$, the
profile derivative integrates to the profile perturbation itself, which
does not decay at $+\infty$, so the translational eigenvalue is removed.
The integrated Evans function is analytic in $\lambda$ and is not forced to
vanish at the origin. For finite shocks, its nonvanishing at the origin
corresponds to the low-frequency condition of \cite{GZ,ZH}, which
combines transversality of the profile with one-dimensional inviscid
stability of the shock. These properties make integrated coordinates the
natural setting for the stability question \cite{HLZ,HLZmulti}.

\subsection{The integrated eigenvalue equations}
From this point on, $v,u,\eps$ denote integrated spectral perturbations,
not the gas fields in \eqref{eq:pde1}--\eqref{eq:pde3}. If
$\mathcal E$ is the integrated total-energy perturbation, then
\[
 \eps=\mathcal E-\hat u\,u=\mathcal E-(\hat v-1)u
\]
is the integrated perturbation of internal energy at linear order.
Linearizing the integrated equations about the integrated profile, and
using $\hat u=\hat v-1$ and $\hat u'=\hat v'$, one obtains for every shock in the
family the linearized integrated eigenvalue equations
\cite[(3.3)]{HLZ},
\begin{align*}
 &\lambda v+v'-u'=0,\\
 &\lambda u+u'+\frac\Gamma{\hat v}\eps'+\frac{\Gamma \hat v'}{\hat v}u
 +\frac{\hat v'-\Gamma \hat e}{\hat v^2}\,v'=\frac{u''}{\hat v},\\
 &\lambda\eps+\eps'+\left(\hat v'-\frac{\nu \hat v''}{\hat v}\right)u
 +\frac{\Gamma \hat e-(\nu+1)\hat v'}{\hat v}\,u'+\frac{\nu \hat e'}{\hat v^2}\,v'
 =\frac\nu{\hat v}\eps''.
\end{align*}
The coefficients involve only the profile and its derivatives. In the
strong-shock limit $e_-=0$, the profile equations \eqref{eq:profile} give
$\hat v'-\Gamma \hat e=-\hat va$ and $\Gamma \hat e-(\nu+1)\hat v'=\hat va-\nu \hat v'$, where $a=1-\hat v$.
The limiting integrated eigenvalue equations are therefore
\begin{align}
 \lambda v+v'&=u',\label{eq:ev1}\\
 \lambda u+u'+\frac\Gamma{\hat v}\eps'+\frac{\Gamma \hat v'}{\hat v} u
 -\frac a{\hat v}v'&=\frac{u''}{\hat v},\label{eq:ev2}\\
 \lambda\eps+\eps'+\left(\hat v'-\frac{\nu \hat v''}{\hat v}\right)u
 +\left(a-\frac{\nu \hat v'}{\hat v}\right)u'
 +\frac{\nu \hat e'}{\hat v^2}v'&=\frac\nu{\hat v}\eps''.\label{eq:ev3}
\end{align}
The first equation is the integrated conservation of mass. The second is
the integrated momentum equation, which is diffusive in $u$. The third is
the integrated thermal equation, which is diffusive in $\eps$ and is coupled
to the mass perturbation through the term $\nu \hat e'v'/\hat v^2$. This last
coupling is the one removed in Section~\ref{sec:thermal}.

\subsection{Behavior at spatial infinity}
Equations~\eqref{eq:ev1}--\eqref{eq:ev3} form a five-dimensional
first-order system $W'=A(x,\lambda)W$ in $W=(v,u,\eps,u',\eps')^T$, whose
coefficients converge exponentially to limits $A_\pm(\lambda)$ as
$x\to\pm\infty$ \cite[Lemma 2.3]{HLZ}. For physical parameters and
$\Rea\lambda>0$, the system has consistent splitting. The solutions
decaying at $-\infty$ form a space of dimension two, and the solutions
decaying at $+\infty$ form a space of dimension three
\cite[Lemma 3.3]{HLZ}. We call these the incoming and outgoing spaces.

The strong-shock limit changes the upstream behavior in an instructive way.
At $x=-\infty$ the limiting profile has $\hat v=1$ and $\hat e=0$, so the frozen
system is block triangular. It consists of the thermal
convection--diffusion equation $\lambda\eps+\eps'=\nu\eps''$ and a
pressureless block for $(v,u)$ forced by $\Gamma\eps'$.
The spatial exponents are
\[
 -\lambda,\qquad \frac{1\pm\sqrt{1+4\lambda}}{2},\qquad
 \frac{1\pm\sqrt{1+4\nu\lambda}}{2\nu}.
\]
The incoming space corresponds to the two exponents with positive real
part, which remain separated from the others on the closed right
half-plane. The transport exponent $-\lambda$ belongs to neither space.
It lies on the imaginary axis when $\lambda$ does, so consistent splitting
fails at $-\infty$ for imaginary $\lambda$ in the limit, reflecting the loss
of the genuine coupling condition when $e_-=0$ \cite[Remark 3.4]{HLZ}.
Nevertheless the incoming space extends analytically to the closed right
half-plane. At $x=+\infty$ the shock is noncharacteristic, and the outgoing
space likewise extends analytically to $\Rea\lambda\ge0$
\cite{MZ1}, \cite[Lemma 3.3]{HLZ}.

\subsection{The Evans function}
Choose bases $V^-(\lambda)$ of the unstable subspace of $A_-(\lambda)$ and
$V^+(\lambda)$ of the stable subspace of $A_+(\lambda)$, analytic in
$\lambda$ and continuous in the parameters, for example by Kato's ODE
\cite[Lemma 3.5]{HLZ}. The corresponding solutions
$W^-_1,W^-_2$ and $W^+_3,W^+_4,W^+_5$ of $W'=AW$, asymptotic to
$e^{A_-x}V^-$ as $x\to-\infty$ and to $e^{A_+x}V^+$ as $x\to+\infty$,
span the incoming and outgoing spaces. The Evans function is
\[
 D(\lambda)=\det\bigl(W^+_3,W^+_4,W^+_5,W^-_1,W^-_2\bigr)\big|_{x=0}.
\]
It vanishes exactly when the incoming and outgoing spaces intersect
nontrivially, and changing either basis multiplies it by a nonzero factor.
By \cite[Proposition 3.8]{HLZ}, $D$ is analytic in $\lambda$ on the closed
right half-plane and continuous in $(\Gamma,\nu,v_+)$ for $\Gamma>0$,
$\nu>0$, and $v_*\le v_+<1$, up to and including the strong-shock limit.
On $\{\Rea\lambda\ge0\}\setminus\{0\}$ its zeros are the eigenvalues of the
integrated operator. Uniform bounds on the size of possible zeros are
proved in \cite[Proposition 4.1]{HLZ} and recalled in
Lemma~\ref{lem:highfrequency}.

At $\lambda=0$ the outgoing space requires care. It is defined by
continuation from $\Rea\lambda>0$, and its continued elements need not
decay. Slow modes whose exponents tend to zero with $\lambda$ approach
bounded, nonzero limits in a prescribed subspace at $+\infty$. Ruling out
solutions that decay at both ends is therefore not enough at zero. This
leads to the following notion.

\begin{definition}[Admissible spectral solution]\label{def:admissible}
In the strong-shock limit and for $\Rea\lambda\ge0$, an admissible solution
belongs to both the incoming and outgoing spaces continued from the
physical problem with $\Rea\lambda>0$. Continuation is understood in
both the shock parameter and the spectral parameter, as in
\cite[Lemmas 3.3 and 3.6]{HLZ}.
\end{definition}

For nonzero $\lambda$ in this half-plane, the continued incoming
solutions decay upstream and the outgoing solutions decay downstream.
The neutral upstream transport branch is not an incoming mode. At zero,
the incoming solutions still decay, whereas outgoing solutions may
approach nonzero constants in a prescribed downstream subspace.
Sections~\ref{sec:admissible} and~\ref{sec:zero} describe these behaviors
explicitly. Theorems~\ref{thm:line} and~\ref{thm:transport} are therefore
equivalent to showing that no nontrivial admissible solution exists at any
$\Rea\lambda\ge0$.

\section{Cancellation of the thermal mass coupling}\label{sec:thermal}
For a limiting profile with arbitrary positive $\Gamma,\nu$, define
\begin{equation}\label{eq:cd}
 d=\frac{a'}a,\qquad
 c=\frac{\nu \hat e'}{a\hat v}=\frac{\hat e-a^2/2}{a}.
\end{equation}
The profile equations imply
\begin{equation}\label{eq:cdident}
 d+\Gamma c=\hat v-\frac\Gamma2a=s,\qquad
 s'=-\frac{ad}{a_*},
\end{equation}
and hence
\begin{equation}\label{eq:cderiv}
 c'=\frac{\hat v}\nu c-d(a+c),\qquad
 d'=-\frac{\Gamma+2}{2}ad-\Gamma c'.
\end{equation}
The identity $d=\hat v-\Gamma \hat e/a$ gives \eqref{eq:cdident}.
Differentiating $c=\hat e/a-a/2$ and $d+\Gamma c=s$ yields
\eqref{eq:cderiv}.

Set
\begin{equation}\label{eq:h}
 h=\eps+c u.
\end{equation}
The map $(u,\eps)\mapsto(u,h)$ is triangular with determinant one.
It retains an independent thermal unknown and imposes no relation between
thermal and velocity perturbations. Adding $c$ times
\eqref{eq:ev2} to \eqref{eq:ev3} cancels $v'$ exactly, since
\[
 \frac{\nu \hat e'}{\hat v^2}-\frac{ca}{\hat v}=0.
\]
Using $\eps'=h'-cu'-c'u$ and
$\eps''=h''-cu''-2c'u'-c''u$, the transformed residuals are
\begin{align}
 R_2={}&\lambda u+\frac{\hat v-\Gamma c}{\hat v}u'
 +\frac\Gamma{\hat v}h'+\frac{\Gamma(\hat v'-c')}{\hat v}u
 -\frac a{\hat v}v'-\frac{u''}{\hat v}=0,\label{eq:r2}\\
 R_h={}&\lambda h+\frac{\hat v+\Gamma c}{\hat v}h'
 +\frac{\hat va-\nu \hat v'+2\nu c'-\Gamma c^2}{\hat v}u'\notag\\
 &+\frac{\hat v\hat v'-\nu \hat v''+\Gamma c\hat v'-(\hat v+\Gamma c)c'+\nu c''}{\hat v}u
 -\frac\nu{\hat v}h''-\frac{(1-\nu)c}{\hat v}u''=0.\label{eq:rh}
\end{align}
The mass derivative now appears only in the momentum equation. This
cancellation is valid for every positive $\Gamma,\nu$. The parameter
restrictions enter only in the coercivity estimates of
Sections~\ref{sec:coercivity} and~\ref{sec:bandestimates}.

\section{The energy identity}\label{sec:energy}
Choose the positive, spatially constant number
\begin{equation}\label{eq:theta}
 \tht=\frac{a_*v_*}{\Gamma}=\frac{2}{(\Gamma+2)^2}.
\end{equation}
This choice cancels the downstream first-derivative coupling: as
$x\to+\infty$, the coefficients in \eqref{eq:Rgen} satisfy
$K,K',d\to0$ and $B\to v_*a_*/\tht$, so that $R\to0$.
For the moment, suppose that all boundary terms vanish. Multiply the mass equation by $\bar v$, \eqref{eq:r2} by $\hat v\bar u/a$, and \eqref{eq:rh} by $\hat v\bar h/(a\tht)$. Take real parts and integrate. The two mass couplings have the particularly simple form
\[
 -\Rea(u'\bar v+v'\bar u)=-\bigl(\Rea(u\bar v)\bigr)',
\]
so no spectral-parameter-dependent cross term remains. Put
\[
 y=(y_1,y_2)^T=\frac1{\sqrt a}(u,h)^T,
 \qquad
 E=\int_{\R}\left(|v|^2+\hat v|y_1|^2+\frac{\hat v}\tht|y_2|^2\right)\dd.
\]
One obtains
\begin{equation}\label{eq:energy}
 0=(\Rea\lambda)E+Q,
\end{equation}
where
\begin{equation}\label{eq:Q}
 Q=\int_{\R}
 y'^*Gy'+R\Rea(y_1'\bar y_2)+P_1|y_1|^2+P_2|y_2|^2
 +C\Rea(y_1\bar y_2)\dd.
\end{equation}
Here a superscript $*$ denotes conjugate transpose, and $R$ is a real scalar coefficient, distinct from the equation residuals. Define
\begin{align}
 K&=\frac{(1-\nu)c}{\tht},
 &G&=\begin{pmatrix}1&K/2\\K/2&\nu/\tht\end{pmatrix},\label{eq:Ggen}\\
 B&=\frac{\hat va-\nu \hat v'+2\nu c'-\Gamma c^2}{\tht},
 &L&=\frac{\hat v\hat v'-\nu \hat v''+\Gamma c\hat v'-(\hat v+\Gamma c)c'+\nu c''}{\tht}.
 \label{eq:BL}
\end{align}
Then
\begin{align}
 P_1&=\left(1-\frac\Gamma2\right)ad+\frac{d^2}{4}+\frac{d'}2,
 \label{eq:P1gen}\\
 P_2&=\frac1\tht\left(-\frac{\nu d^2}{4}+\frac d2
                 +\frac\Gamma2(cd-c')\right),\label{eq:P2gen}\\
 R&=K'-dK+B-\Gamma,\label{eq:Rgen}\\
 C&=-K\left(\frac{d^2}{4}+\frac{d'}2\right)
                      +\frac{(B+\Gamma)d}{2}+L.\label{eq:Cgen}
\end{align}
Appendix~\ref{app:identity} derives these coefficients and gives the corresponding pointwise divergence identity. Section~\ref{sec:admissible} justifies every integration for admissible nonzero-frequency solutions.

With
\[
 N=\begin{pmatrix}0&R/2\\0&0\end{pmatrix},\qquad
 P=\begin{pmatrix}P_1&C/2\\C/2&P_2\end{pmatrix},
\]
the only square completion needed, when $G>0$, is
\begin{equation}\label{eq:complete}
 Q=\int_{\R}(y'+G^{-1}Ny)^*G(y'+G^{-1}Ny)\dd
       +\int_{\R}y^*Zy\dd,
 \qquad Z=P-N^TG^{-1}N.
\end{equation}
In particular,
\begin{equation}\label{eq:Z}
 Z=\begin{pmatrix}
 P_1&C/2\\[2pt]
 C/2&P_2-\dfrac{R^2}{4}(G^{-1})_{11}
 \end{pmatrix}.
\end{equation}
Thus positivity of $G$ and a lower bound on the two-by-two matrix $Z$ are sufficient for coercivity of the energy form.

\section{Elementary coercivity estimates}\label{sec:coercivity}
In this section assume that
$0<\Gamma\le1$ and $\nu=\Gamma+1$. To shorten the coefficient formulas,
write $g=\Gamma$ and $\gamma=g+1$. Lemma~\ref{lem:profile} gives
\begin{equation}\label{eq:explicitcd}
 c=d=\frac{s}{\gamma},\qquad s'=-\frac{s(1-s)}\gamma.
\end{equation}
In particular, the scalar coefficients depend on $x$ only through $s$.

Substitution into \eqref{eq:Ggen}--\eqref{eq:Cgen} gives
\begin{equation}\label{eq:Gline}
 G=\begin{pmatrix}
 1&-\dfrac{g s}{2\gamma\tht}\\[5pt]
 -\dfrac{g s}{2\gamma\tht}&\dfrac\gamma\tht
 \end{pmatrix},
 \qquad \tht=\frac2{(g+2)^2},
\end{equation}
and
\begin{align}
 P_1&=\frac{s}{4\gamma^2}
 \left[1+(1-s)\frac{2+g-4g^2}{g+2}\right],\label{eq:P1line}\\
 P_2&=\frac{s(g+2)^2(4g+2-\gamma s)}{8\gamma^2},\label{eq:P2line}\\
 R&=-\frac{g s}{2\gamma^2}
 \left[(g^2+6g+6)s+g^2-2g-4\right],\label{eq:Rline}\\
 C&=\frac{g s^2}{8\gamma^3}
 \left[6g^2+12g+8-(3g^2+8g+8)s\right].\label{eq:Cline}
\end{align}
All estimates that follow use only $0<g\le1$ and $0\le s\le1$.

\begin{lemma}[Elementary coefficient bounds]\label{lem:bounds}
The coefficients satisfy
\begin{equation}\label{eq:fivebounds}
 G>0,\quad (G^{-1})_{11}\le\frac65,\qquad
 P_1\ge\frac{s}{24},\quad P_2\ge\frac{s}{2},\quad
 |R|\le s,\quad 0\le C\le\frac{s}{6}.
\end{equation}
\end{lemma}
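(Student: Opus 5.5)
Each of the six bounds in \eqref{eq:fivebounds} is a polynomial inequality in $(g,s)\in(0,1]\times[0,1]$, after substituting $1/\tht=(g+2)^2/2$ into \eqref{eq:Gline}--\eqref{eq:Cline}. I would prove each one by first eliminating $s$, using linearity or concavity in $s$ to reduce to endpoint or vertex values, and then checking a one-variable polynomial inequality in $g$ on $(0,1]$. No property of the profile is needed beyond $0\le s\le1$ from Lemma~\ref{lem:profile}.

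For $G$, the diagonal entries $1$ and $\gamma/\tht$ are positive, so it suffices to treat the determinant. A direct computation gives
\[
 (G^{-1})_{11}=\frac{G_{22}}{\det G}
 =\Bigl(1-\frac{g^2s^2}{4\gamma^3\tht}\Bigr)^{-1}
 =\Bigl(1-\frac{g^2(g+2)^2s^2}{8\gamma^3}\Bigr)^{-1}.
\]
Both $G>0$ and $(G^{-1})_{11}\le 6/5$ therefore follow from
\[
 q(g,s):=\frac{g^2(g+2)^2s^2}{8(g+1)^3}\le\frac16 .
\]
The quantity $q$ is increasing in $s$. It is also increasing in $g$: the logarithmic derivative $2/g+2/(g+2)-3/(g+1)$ is positive on $(0,1]$. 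Hence $q\le q(1,1)=9/64<1/6$.

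For $P_1$, the bracket in \eqref{eq:P1line} is affine in $s$. Its value is $1$ at $s=1$ and $(4+2g-4g^2)/(g+2)$ at $s=0$. The latter is at least $2/3$ on $(0,1]$, since $3(4+2g-4g^2)-2(g+2)=8+4g-12g^2\ge0$ there. Combined with $1/(4\gamma^2)\ge1/16$, this gives $P_1\ge s/24$. For $P_2$, I would use $4g+2-\gamma s\ge 3g+1$, which reduces the claim to $(g+2)^2(3g+1)\ge4(g+1)^2$. This holds because $(g+2)^2\ge(g+1)^2$ and $3g+1\ge1$, with a multiplicative slack factor of $4$ from $(g+2)^2\ge 4$ checked directly: expanding gives $3g^3+13g^2+16g+4\ge 4g^2+8g+4$.

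For $R$, the bracket in \eqref{eq:Rline} is affine in $s$. Its values are $g^2-2g-4$ at $s=0$, of modulus $4+2g-g^2$, and $2\gamma^2$ at $s=1$. Hence $|R|\le s\cdot g\max\{4+2g-g^2,\,2\gamma^2\}/(2\gamma^2)$, and the required bound reduces to $g(4+2g-g^2)\le 2(g+1)^2$ and $g\le1$. The first of these is $g^3-2g+2-\ldots\ge0$; more simply it is $2+g^2\ge g^2\cdot0$ after expansion to $-g^3+2g^2+4g\le 2g^2+4g+2$, i.e.\ $g^3+2\ge0$.

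For $C$, the bracket in \eqref{eq:Cline} is affine in $s$ with endpoint values $6g^2+12g+8$ and $3g^2+4g$, both positive, so $C\ge0$. For the upper bound I divide by $s$ and must show
\[
 \frac{g\,s\,[A-Bs]}{8\gamma^3}\le\frac16,\qquad A=6g^2+12g+8,\quad B=3g^2+8g+8 .
\]
This is the step I expect to be the main obstacle, because the crude bound $s\le1$ in the bracket fails: it gives $26/64$ at $g=1$. Instead I maximize the concave quadratic $s(A-Bs)$ over $s$. Its maximum is at most $A^2/(4B)$, which yields the one-variable inequality
\[
 3gA^2\le 16B\gamma^3\qquad(0<g\le1).
\]
At $g=1$ this reads $2028\le2432$, and I would verify it on all of $(0,1]$ by expanding the difference into a polynomial with a sign that is checkable, for example by grouping terms or bounding the difference from below on $[0,1]$. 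As a sanity check, the actual maximum at $g=1$ is about $0.139<1/6$.
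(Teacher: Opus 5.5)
Your proposal is correct and follows the paper's proof essentially step for step. It uses the same Schur-complement quantity $\chi=g^2(g+2)^2s^2/(8(g+1)^3)\le 9/64$, the same affine-in-$s$ endpoint reductions for $P_1$, $P_2$, and $R$, and the same completion of the square leading to $3gA^2\le 16B(g+1)^3$. The check you left open closes exactly as in the paper: the difference expands to $-60g^5-160g^4-64g^3+240g^2+320g+128$, and $g^3,g^4,g^5\le g^2\le g$ bound it below by $128+276g>0$. In your $P_2$ and $R$ steps, the garbled verbal justifications should simply be replaced by the correct expansions you already wrote down.
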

\begin{proof}
\emph{Diffusion.} The Schur complement of the lower-right entry of $G$ is $1-\chi$, where
\[
 \chi=\frac{g^2(g+2)^2s^2}{8(g+1)^3}.
\]
The factor $g^2(g+2)^2/(g+1)^3$ is increasing in $g$.
Evaluating at $g=1$ gives $0\le\chi\le9/64<1/6$. This proves $G>0$ and
$(G^{-1})_{11}=(1-\chi)^{-1}\le6/5$.

\emph{The diagonal terms.} The ratio $P_1/s$ is affine in $s$;
both endpoint values are at least $1/24$ for $0<g\le1$.
The ratio $P_2/s$ decreases with $s$ and has value at least $1/2$
at $s=1$. Thus $P_1\ge s/24$ and $P_2\ge s/2$.

\emph{The first-derivative coupling.} The ratio $R/s$ is affine in $s$, with endpoint values
\[
 r_1=-g,\qquad
 r_0=\frac{g(4+2g-g^2)}{2(g+1)^2}.
\]
For $0<g\le1$, one has $-1\le r_1<0$ and $0<r_0\le3/4$. Therefore $|R|\le s$.

\emph{The zeroth-order coupling.} Let
\[
 A=6g^2+12g+8,\qquad B_0=3g^2+8g+8.
\]
Since $A-B_0=g(3g+4)>0$, equation~\eqref{eq:Cline} gives $C\ge0$. Completing the scalar square,
\[
 s(A-B_0s)\le\frac{A^2}{4B_0},\qquad
 \frac Cs\le\frac{gA^2}{32(g+1)^3B_0}.
\]
Using $g^3,g^4,g^5\le g^2\le g$, one obtains
\[
 16(g+1)^3B_0-3gA^2\ge128+276g>0.
\]
Hence $C\le s/6$, completing the proof.
\end{proof}

\begin{proposition}[Coercivity]\label{prop:coercivity}
For $0<g\le1$ and $\nu=g+1$, one has $Z\ge(s/180)I_2$.
\end{proposition}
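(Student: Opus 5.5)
The plan is to prove the matrix inequality pointwise in $x$. Every entry of $Z$ in \eqref{eq:Z} depends on $x$ only through $s\in[0,1]$. It therefore suffices to show that $Z-(s/180)I_2$ has nonnegative diagonal entries and nonnegative determinant for each $s\in[0,1]$. At $s=0$ every bound in \eqref{eq:fivebounds} forces $P_1=P_2=R=C=0$, so $Z=0$ and there is nothing to prove. For $s>0$ we divide by $s$ and work with the normalized entries. All the input comes from Lemma~\ref{lem:bounds}; no new coefficient computation is needed.

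\emph{Step 1: the diagonal entries.} The first entry is $Z_{11}=P_1\ge s/24$. For the second we combine $P_2\ge s/2$, $|R|\le s$ and $(G^{-1})_{11}\le 6/5$:
\[
 Z_{22}=P_2-\frac{R^2}{4}(G^{-1})_{11}\ge \frac s2-\frac{3s^2}{10}\ge \frac s5 .
\]
The last inequality uses $s\le1$ to replace $s^2$ by $s$; this is the only place where the quadratic term $R^2$ is traded for a linear one. Consequently
\[
 Z_{11}-\frac{s}{180}\ge \frac{13s}{360},\qquad Z_{22}-\frac{s}{180}\ge\frac{7s}{36},
\]
and both are nonnegative.

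\emph{Step 2: the off-diagonal entry and the determinant.} From $0\le C\le s/6$ we get $|Z_{12}|=C/2\le s/12$. Since both diagonal entries of $Z-(s/180)I_2$ are nonnegative, its determinant is at least
\[
 \frac{13s}{360}\cdot\frac{7s}{36}-\frac{s^2}{144}
 =s^2\left(\frac{91}{12960}-\frac{90}{12960}\right)=\frac{s^2}{12960}\ge0 .
\]
A symmetric $2\times2$ matrix with nonnegative diagonal and nonnegative determinant is positive semidefinite. Hence $Z\ge(s/180)I_2$.

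\emph{Main obstacle.} The only delicate point is that the constants are tight: the determinant margin is $1/12960$. I would therefore keep the exact fractions from Lemma~\ref{lem:bounds} throughout and not weaken any of them. In particular, the bound $Z_{22}\ge s/5$ must be used rather than the cruder $s/2-3s/10\cdot s$ evaluated differently. If a constant from the lemma turned out to be slightly looser than stated, the fallback would be to use the finer $s$-dependence: $Z_{22}\ge s/2-3s^2/10$ exactly, and $C/s$ vanishing at $s=0$. The determinant inequality would then be checked as a polynomial inequality in $s$ on $[0,1]$.
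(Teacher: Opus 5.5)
Your proof is correct and is essentially the paper's argument. You use the same entrywise bounds $Z_{11}\ge s/24$, $Z_{22}\ge s/2-3s^2/10\ge s/5$ and $|Z_{12}|\le s/12$ from Lemma~\ref{lem:bounds}, followed by a $2\times2$ positivity check that the paper phrases as Young's inequality $|y_1||y_2|/6\le|y_1|^2/28+7|y_2|^2/36$; this is equivalent to your determinant margin $1/12960$. (Minor point: at $s=0$ the lower bounds only give $P_1,P_2\ge0$, not $P_1=P_2=0$, but your determinant argument already covers $s=0$, so that separate case is unnecessary.)
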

\begin{proof}
By Lemma~\ref{lem:bounds} and \eqref{eq:Z},
\[
 Z_{11}\ge\frac{s}{24},\qquad
 Z_{22}\ge\frac{s}{2}-\frac3{10}s^2\ge\frac{s}{5},\qquad
 |Z_{12}|\le\frac{s}{12}.
\]
For every complex vector $y$, it follows that
\[
 y^*Zy\ge s\left(\frac{|y_1|^2}{24}+\frac{|y_2|^2}{5}
                         -\frac{|y_1||y_2|}{6}\right).
\]
Young's inequality,
$|y_1||y_2|/6\le |y_1|^2/28+7|y_2|^2/36$,
therefore gives $y^*Zy\ge(s/180)|y|^2$.
Using \eqref{eq:complete} and $h=\eps+(s/\gamma)u$ then yields
\begin{equation}\label{eq:coercive}
 Q\ge\frac1{180}\int_{\R}\frac{s}{a}
 \left(|u|^2+\left|\eps+\frac{s}{\Gamma+1}u\right|^2\right)\dd.
\end{equation}
\end{proof}

\section{Coercivity on an explicit transport region}\label{sec:bandestimates}
We now assume $(\Gamma,\nu)\in\mathcal T$. Throughout this section put
\[
 g=\Gamma,\quad n=\nu,\quad \gamma=1+g,\quad
 n_\ell=1+g/2,\quad n_h=1+3g/2,\quad
 \tht=\frac2{(g+2)^2}.
\]
Use the variables $k,t,z$ of Lemma~\ref{lem:slopestrip}. The estimates
below hold on the entire algebraic region
\begin{equation}\label{eq:algebraicstrip}
 0<g\le2/3,\quad n_\ell\le n\le n_h,\quad 0\le s\le1,\quad
 \min\{1/\gamma,1/n\}\le z\le\max\{1/\gamma,1/n\}.
\end{equation}
In particular they hold along every limiting profile in $\mathcal T$.
Here $c=s(1-z)/g$, $d=sz$, and all profile derivatives in the energy
coefficients are replaced using \eqref{eq:cderiv}. Expressions divided
by $s$ below mean their continuous algebraic extensions at $s=0$.

\begin{lemma}[Scalar bounds on the transport region]\label{lem:stripbounds}
On \eqref{eq:algebraicstrip},
\begin{equation}\label{eq:stripbasic}
 G>0,\qquad (G^{-1})_{11}\le\frac43,\qquad
 |R|\le s,\qquad |C|\le\frac5{21}s.
\end{equation}
The diagonal coefficients satisfy
\begin{equation}\label{eq:stripdiagonal}
\begin{array}{c|cc}
 &P_1/s&P_2/s\\ \hline
 0<g\le2/5&3/32&1/2\\
 2/5\le g\le2/3&3/50&3/4
\end{array}
\end{equation}
where each table entry is a lower bound.
\end{lemma}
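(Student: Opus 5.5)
The plan is to reduce every claimed inequality to an explicit polynomial inequality in the variables $(g,k,t,s)$. Each such inequality will then be checked on the box $0<g\le 2/3$, $1/2\le k\le 3/2$, $t$ between $1$ and $k$, $0\le s\le1$. This box is exactly \eqref{eq:algebraicstrip} after the substitution $n=1+gk$, $z=1/(1+gt)$ of Lemma~\ref{lem:slopestrip}.

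\emph{Reduction to polynomials.} First I express all coefficients algebraically. With $c=s(1-z)/g=st/(1+gt)$ and $d=sz=s/(1+gt)$, every profile derivative is eliminated using \eqref{eq:cderiv} together with $s'=-ad/a_*=-s(1-s)z$ and $\hat v=v_*+a_*s$, $a=a_*(1-s)$. The second derivatives $\hat v''$ and $c''$ in $L$ require $z'=f(s,z)$ from the proof of Lemma~\ref{lem:slopestrip}; I use the formula for $f$ written in terms of $t$. The result is that $G$, $P_1$, $P_2$, $R$, $C$ are rational in $(g,k,t,s)$, with denominators that are powers of $(1+gt)$, $(1+gk)$ and $(g+2)$. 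These are all positive. Moreover $P_1$, $P_2$, $R$, $C$ each carry a factor $s$ (since $c,d,c',d'$ do), so $P_i/s$, $R/s$, $C/s$ are polynomials after clearing these positive denominators.

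\emph{The individual bounds.} For $G$, the Schur complement is $1-\chi$ with $\chi=K^2\tht/(4n)=(1-n)^2c^2/(4n\tht)$. Using $c\le \max\{1,k\}\,s$ and $(n-1)=gk$, one gets $\chi\le g^4k^2\max\{1,k\}^2(g+2)^2/(8(1+gk))$, which is small on the box. So $\chi\le1/4$ gives $G>0$ and $(G^{-1})_{11}\le4/3$. For $R/s$ and $C/s$, the expressions are low-degree polynomials in $s$ for fixed $(g,k,t)$. I bound them by their values or extrema in $s$, as in Lemma~\ref{lem:bounds}, and then bound each monomial coefficient using $g\le2/3$ and $t,k\in[1/2,3/2]$. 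For the diagonal table, the $g$-interval is split at $2/5$ as indicated. On each piece I show that $P_i/s$ minus the tabulated constant is a polynomial with nonnegative value by a sign argument: where possible I write it as affine or quadratic in $s$ with controlled endpoint values, and in $t$ I use monotonicity or convexity on the interval between $1$ and $k$.

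\emph{Main obstacle.} The hardest step is $C$. It contains $L$, which involves $\nu c''$ and $\nu\hat v''$, and hence $f(s,z)$. This makes $C/s$ a multivariate polynomial of fairly high degree with mixed-sign coefficients. Moreover, $|C|\le\frac5{21}s$ must hold uniformly across the band, not just near $t=1$. My first attempt will be to write $C/s=C_0+(t-1)C_1$. Here $C_0$ is the Becker-line expression from \eqref{eq:Cline}, suitably generalized to $k\ne1$, and $C_1$ is controlled using $|t-1|\le|k-1|\le 1/2$. If that proves too lossy, I will subdivide the $g$-range further and the $k$-range at $k=1$, and bound coefficients crudely via $g^j\le (2/3)^{j-1}g$. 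This is the same Bernstein-type endpoint checking that Lemma~\ref{lem:bounds} uses. It is consistent with the Lean verification mentioned in the introduction.
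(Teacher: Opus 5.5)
Your overall plan is viable in principle: express everything as rational functions of $(g,k,t,s)$ and then check polynomial inequalities on the region. It is essentially what the Lean formalization does, after substituting $t=1+(k-1)\tau$ with $\tau\in[0,1]$ to make the region an actual box. The gap is that you propose to finish with lossy estimates, but several constants in the lemma are attained exactly on the boundary of \eqref{eq:algebraicstrip}.

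The one step you carry out already fails for this reason. Since $c=st/(1+gt)$,
\[
 \chi=\frac{g^2k^2s^2t^2(g+2)^2}{8(1+gk)(1+gt)^2}.
\]
At $g=2/3$, $k=t=3/2$, $s=1$ (so $n=2$, $c=3/4$, $\tht=9/32$), this equals exactly $1/4$. Hence $(G^{-1})_{11}\le4/3$ is sharp. Your bound $c\le\max\{1,k\}s$ discards the factor $(1+gt)^{-2}$, which equals $1/4$ at that corner. The resulting estimate should read $g^2k^2\max\{1,k\}^2s^2(g+2)^2/(8(1+gk))$, not $g^4\cdots$. It equals $s^2$ at the corner, so it gives neither $\chi\le1/4$ nor even $G>0$ at $s=1$. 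Your $g^4$ version equals $4/9$ there, which is still too large. Keep the exact expression instead. It is increasing in $k$ and in $t$, and after substituting $k=t=3/2$ the remaining factor is increasing in $g$, with value $1/4$ at $g=2/3$.

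The same sharpness defeats your plan to bound $R/s$ termwise, estimating monomial coefficients with $g\le2/3$ and $k,t\in[1/2,3/2]$. $R/s$ is affine in $s$, and its value at $s=0$,
\[
 \rho_0=\frac{6-g}{2}+\frac{g+2}{2n}-\frac{g+2}{2}\Bigl(3+\frac1n\Bigr)z,
\]
equals exactly $1$ at $g=2/3$, $n=2$, $z=1/2$. The diagonal table has the same feature:
\begin{itemize}
\item At $s=1$, $P_1/s=z^2/4+(1-z)(z-1/n)/2$, which equals $3/50$ at $g=2/3$, $n=n_\ell$, $z=1/\gamma$.
\item At $s=1$, $n=n_\ell$, $z=1/n$, one gets $P_2/s=(3g+2)/4$, which tends to $1/2$ as $g\to0$.
\end{itemize}

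What is missing is the monotonicity structure the paper exploits. The $s$-endpoint values are monotone or concave in $z$ at fixed $n$. Their restrictions to the edges $z=1/\gamma$ and $z=1/n$ are monotone, concave, or unimodal in $n$. So every bound reduces to exact evaluation at corners and along the curve $z=1/n$. An exact certificate, such as nonnegative Bernstein coefficients, would also work; crude coefficient bounds will not.

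For $C$ the constant $5/21$ has slack. You should still use that $C/s$ is quadratic in $s$ with no constant term, $C/s=2s(1-s)C_1+s^2C_2$, and bound the two coefficients separately by the same corner-and-edge reduction. Your proposed split about $t=1$ is not carried out.

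A small correction: $R$ does not carry the factor $s$ because $c,d,c',d'$ do. $B$ contains $\hat va/\tht$, which is nonzero at $s=0$. The factor appears because the choice $\tht=a_*v_*/\Gamma$ makes $B-\Gamma$ vanish there.
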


\begin{proof}
The slope interval reduces the estimates to endpoint comparisons
and one-variable bounds.

\emph{Diffusion.}
The Schur complement of the lower-right entry of $G$ is $1-\chi$, where
\[
 \chi=\frac{(n-1)^2c^2}{4n\tht}
 =\frac{g^2k^2s^2t^2(g+2)^2}
 {8(1+gk)(1+gt)^2}.
\]
The last expression is increasing in $k$ and $t$. Substituting
$k=t=3/2$ gives
\[
 \chi\le s^2\frac{81g^2(g+2)^2}{16(3g+2)^3}\le\frac{s^2}{4}.
\]
The remaining factor is increasing in $g$ and equals $1/4$ at
$g=2/3$.
Thus $G>0$ and $(G^{-1})_{11}=(1-\chi)^{-1}\le4/3$.

\emph{The first diagonal coefficient.}
The ratio $P_1/s$ is affine in $s$. Its values at zero and one are
\[
 \alpha_0=\frac{(2-g)z-g(1-z)/n}{2(g+2)},\qquad
 \alpha_1=\frac{z^2}{4}+\frac{(1-z)(z-1/n)}2.
\]
Both are increasing in $n$ and $z$ on the region under consideration.
For $n\le\gamma$, their minima are therefore bounded by their values
at $(n,z)=(n_\ell,1/\gamma)$. For $n\ge\gamma$, use $z\ge1/n$;
the resulting functions of $n$ decrease, so their minima occur at $n_h$.
The four resulting lower bounds are
\begin{align*}
 L_0&=\frac{4-3g^2}{2(g+1)(g+2)^2},&
 U_0&=\frac{2(2+2g-3g^2)}{(g+2)(3g+2)^2},\\
 L_1&=\frac{2+g-2g^2}{4(g+1)^2(g+2)},&
 U_1&=\frac1{(3g+2)^2}.
\end{align*}
On $0<g\le2/3$, these expressions satisfy
$L_0,U_0\ge3/32$ and $L_1,U_1\ge3/50$.
On $0<g\le2/5$, the latter pair is also at least $3/32$.
These endpoint comparisons prove the asserted bounds for $P_1$.

\emph{The second diagonal coefficient.}
The ratio $P_2/s$ is also affine in $s$. At zero its value is
\[
 \beta_0=\frac{g+2}{4}
 \left((3g+2)z-\frac{g(1-z)}n\right).
\]
It is increasing in $n,z$. The same endpoint reduction as above gives
\[
 \beta_0\ge\min\left\{
 \frac{g^2+8g+4}{4(g+1)},\quad
 \frac{(g+2)(3g^2+6g+2)}{(3g+2)^2}\right\}\ge1.
\]
At $s=1$, the value is
\[
 \beta_1=\frac1\tht
 \left(\frac{3z}{2}-\left(1+\frac n4\right)z^2
                  -\frac{1-z}{2n}\right).
\]
This is concave in $z$, so it is bounded below by the smaller value at
$z=1/n$ or $z=1/\gamma$. At $z=1/n$ it is
$(g+2)^2(3n-2)/(8n^2)$, a function with a single maximum in $n$.
At $z=1/\gamma$ it is concave in $n$. Hence in both cases it suffices
to take $n=n_\ell,n_h$. The four endpoint values are
\begin{align*}
 q_1&=\frac{3g+2}{4},&
 q_2&=\frac{(g+2)^2(9g+2)}{4(3g+2)^2},\\
 q_3&=\frac{(g+2)(3g^2+16g+4)}{16(g+1)^2},&
 q_4&=\frac{(g+2)^2(19g^2+16g+4)}{16(g+1)^2(3g+2)}.
\end{align*}
Each $q_j$ is at least $1/2$ on $0<g\le2/3$ and at least
$3/4$ on $2/5\le g\le2/3$. Together with the bound for $\beta_0$,
these endpoint comparisons prove the asserted bounds for $P_2$.

\emph{The first-derivative coupling.}
The ratio $R/s$ is affine in $s$, with endpoint values
\begin{align*}
 \rho_0&=\frac{6-g}{2}+\frac{g+2}{2n}
                  -\frac{g+2}{2}(3+1/n)z,\\
 \rho_1&=-g+\frac{(1-z)(1/n-z)}{g\tht}.
\end{align*}
At fixed $n$, $\rho_0$ decreases with $z$. Its values at $z=1/n$
and $z=1/\gamma$ are, respectively, increasing and decreasing in $n$.
Thus its extremes are among the following four numbers:
\[
 -\frac{g^2}{2(g+2)},\quad
 \frac{g(-9g^2+30g+32)}{2(3g+2)^2},\quad
 \frac{g(4-g)}{2(g+1)},\quad
 \frac{g(-3g^2+6g+8)}{2(g+1)(3g+2)}.
\]
The first belongs to $[-1/12,0]$, and the remaining three belong to
$[0,1]$. Hence $|\rho_0|\le1$.

For $n\le\gamma$, the nonnegative product
$(1-z)(1/n-z)$ decreases with $z$, and hence
\[
 -g\le\rho_1\le-g+\frac{g(g+2)}{2(g+1)^2}\le0.
\]
For $n\ge\gamma$, the product $(1-z)(z-1/n)$ is increasing on
the slope interval, since $1+1/n-2/\gamma>0$. Consequently
\[
 -W(g)\le\rho_1\le-g,\qquad
 W(g)=g+\frac{g(g+2)^2}{2(g+1)^2(3g+2)}<1.
\]
It follows that $|\rho_0|,|\rho_1|\le1$, and thus $|R|\le s$.

\emph{The zeroth-order coupling.}
After using the profile equations, $C/s$ is quadratic in $s$ and has
zero constant term. Write it as
\begin{equation}\label{eq:Cquadratic}
 \frac Cs=2s(1-s)C_1+s^2C_2.
\end{equation}
Put $q=(1-z)/g$, so that $c=sq$. Direct expansion of
\eqref{eq:Cgen}, or the first-derivative formula in the appendix, gives
\begin{align}
 C_1={}&\frac{(g+2)(n+1)z^2}{4}-z
 +\frac{(g+2)(n-1)(2-g)zq}{8}
 +\frac{(g+2)(n-1)g^2q^2}{8n},\label{eq:calA}\\
 C_2={}&\frac{gz}{2}+\frac{(g+2)^2(1-z)}{8g}
 \left[(3n-1)z^2-2(n+1-1/n)z+2(1-1/n)\right].\label{eq:calB}
\end{align}
We will prove
\begin{equation}\label{eq:ABbounds}
 0\le C_1\le1/3,\qquad -1/6\le C_2\le1/5.
\end{equation}

First, $C_1$ increases with $n$ at fixed $g,z$, as is immediate
from \eqref{eq:calA}. It also increases with $z$ on the full enclosing
rectangle $n_\ell\le n\le n_h$, $1/n_h\le z\le1/n_\ell$.
Indeed, using $k\le3/2$, $n\ge1$, $z\ge1/2$, and $g\le2/3$ gives
\begin{align*}
 \partial_zC_1
 &\ge(g+2)\left[\frac{1+3g}{4}z+\frac{6-9g}{16}\right]-1\\
 &\ge\frac{(2-3g)(9g^2+16g+4)}{16(3g+2)}\ge0.
\end{align*}
Its extremes on the slope interval therefore reduce to
$(n_\ell,1/\gamma)$, $(n_h,1/\gamma)$, and the curve $z=1/n$.
The first value is $g^2(g+2)/(16(g+1)^2)\ge0$.
The second is
\[
 C_1(n_h,1/\gamma)=\frac{g(-3g^3+24g^2+60g+32)}{16(g+1)^2(3g+2)}.
\]
This function is increasing in $g$ and has value $23/75<1/3$
at $g=2/3$.
On the curve $z=1/n$, put $X=gk\in[0,1]$. Then
\[
 C_1=
 \frac{(2+1/k)X^3+(4k-4+6/k)X^2+(4k-4+4/k)X}
 {8(1+X)^3}.
\]
For $1/2\le k\le3/2$ these three coefficients are nonnegative and
bounded above by $4,10,6$. Hence
\[
 0\le C_1\le\frac{X(2X+3)}{4(1+X)^2}\le\frac5{16}<\frac13.
\]
This proves the bounds for $C_1$.

For $C_2$, differentiation of \eqref{eq:calB} gives
\[
 \partial_nC_2=\frac{(g+2)^2(1-z)}{8g}
 \left[3z^2-2z+\frac{2(1-z)}{n^2}\right]\ge0.
\]
Here $n\le2$ and $z\ge1/2$, so the bracket is at least
$3(z-1/2)(z-1/3)\ge0$. Also,
\[
 \partial_zC_2=\frac g2+\frac{(g+2)^2}{8g}\,b(n,z),
\]
where
\[
 b(n,z)=-3(3n-1)z^2+2(5n+1-2/n)z-2n-4+4/n.
\]
This is concave in $z$. At $z=1/n$ it equals
\[
 \frac{(n-1)[3-2(n-1)^2]}{n^2}\ge0.
\]
At $z=1/\gamma$, bound $1/n$ below by $1/n_h$ and the remaining
linear term in $n$ by its smaller endpoint value. This bounds $b$ below
by the smaller of
\[
 \frac{g(14+g-22g^2-6g^3)}{2(g+1)^2(3g+2)},\qquad
 \frac{g(10+19g+6g^2-18g^3)}{2(g+1)^2(3g+2)}.
\]
Both are positive on $0<g\le2/3$. Therefore $b\ge0$ on the
slope interval and $\partial_zC_2>0$ there.

The extrema of $C_2$ again reduce to the same two corner values
and the curve $z=1/n$. At the lower corner,
\[
 C_2(n_\ell,1/\gamma)
 =\frac{g(2g^3+9g^2+4g-4)}{16(g+1)^3}\ge-\frac g4\ge-\frac16.
\]
At the upper corner, direct substitution gives
$C_2(n_h,1/\gamma)<1/5$.
On $z=1/n$, with $n=1+gk$, the value is
\[
 C_2\bigl(1+gk,1/(1+gk)\bigr)=\frac{g(gk-2k+2)(3gk+2k+2)}{8(1+gk)^3}.
\]
This function is decreasing in $k$ on the stated parameter interval.
At $k=1/2$ it equals $3g/(4(g+2))\le3/16<1/5$.
At $k=3/2$, writing $Y=3g/2\in[0,1]$, it is
\[
 \frac{Y(Y-1)(3Y+5)}{12(1+Y)^3}\ge-\frac16,
\]
since $Y(1-Y)\le1/4$ and $3Y+5\le8$.
The monotonicity just proved shows that the lower corner and this last
curve bound control the minimum, and the upper corner and curve control
the maximum. This proves \eqref{eq:ABbounds}.
Finally, \eqref{eq:Cquadratic} implies
\[
 \left|\frac Cs\right|
 \le\frac23s(1-s)+\frac15s^2
 =\frac23s-\frac7{15}s^2\le\frac5{21}.
\]
This completes the scalar bounds.
\end{proof}

\begin{proposition}[Coercivity on the explicit region]\label{prop:stripcoercivity}
For $(\Gamma,\nu)\in\mathcal T$,
\begin{equation}\label{eq:stripcoercivity}
 Z\ge\frac{s}{200}I_2,\qquad
 Q\ge\frac1{200}\int_{\R}\frac{s}{a}
       \left(|u|^2+|\eps+c u|^2\right)\dd.
\end{equation}
The second inequality holds whenever the boundary terms in the energy
identity vanish.
\end{proposition}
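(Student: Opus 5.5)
The plan is to follow the proof of Proposition~\ref{prop:coercivity}, with Lemma~\ref{lem:stripbounds} in place of Lemma~\ref{lem:bounds}. Formula \eqref{eq:Z} writes $Z$ in terms of $P_1$, $P_2$, $C$, $R$ and $(G^{-1})_{11}$, so each entry of $Z$ is bounded below by a multiple of $s$. Then a two-by-two positivity criterion gives $Z-(s/200)I_2\ge0$. Because the diagonal bounds in \eqref{eq:stripdiagonal} change at $g=2/5$, I treat the two ranges $0<g\le2/5$ and $2/5\le g\le2/3$ separately.

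\emph{Entry bounds.} From \eqref{eq:stripbasic}, $|R|\le s$ and $(G^{-1})_{11}\le 4/3$, so
\[
 \frac{R^2}{4}(G^{-1})_{11}\le\frac{s^2}{3}\le\frac s3,
 \qquad
 |Z_{12}|=\frac{|C|}{2}\le\frac{5s}{42},
\]
where the middle step uses $0\le s\le1$. For $0<g\le2/5$ the table gives $Z_{11}\ge 3s/32$ and $Z_{22}\ge s/2-s/3=s/6$. For $2/5\le g\le 2/3$ it gives $Z_{11}\ge 3s/50$ and $Z_{22}\ge 3s/4-s/3=5s/12$.

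\emph{Positivity of $Z-(s/200)I_2$.} At points with $s>0$, it suffices that both diagonal entries of $Z-(s/200)I_2$ are nonnegative and that
\[
 \Bigl(\tfrac{Z_{11}}{s}-\tfrac1{200}\Bigr)\Bigl(\tfrac{Z_{22}}{s}-\tfrac1{200}\Bigr)\ge\Bigl(\tfrac5{42}\Bigr)^2 .
\]
At $s=0$ every entry vanishes, so there is nothing to check. The diagonal entries are clearly nonnegative in both cases.

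In the first case the product is
\[
 \Bigl(\tfrac{3}{32}-\tfrac1{200}\Bigr)\Bigl(\tfrac16-\tfrac1{200}\Bigr)=\tfrac{71}{800}\cdot\tfrac{97}{600}\approx0.014348,
\]
which exceeds $25/1764\approx0.014172$. This is the main obstacle: the margin is thin, so the exact rationals should be compared, for instance by clearing denominators. If the comparison failed, I would keep the sharper bound $Z_{22}\ge s/2-s^2/3$ and the quadratic form of $C/s$ from \eqref{eq:Cquadratic}, and redo the estimate as a function of $s$.

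In the second case the product is
\[
 \Bigl(\tfrac3{50}-\tfrac1{200}\Bigr)\Bigl(\tfrac5{12}-\tfrac1{200}\Bigr)\approx0.0226,
\]
which is comfortably larger than $25/1764$. This proves $Z\ge(s/200)I_2$ on all of $\mathcal T$.

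\emph{Energy bound.} Assume the boundary terms vanish, so that \eqref{eq:complete} holds. Since $G>0$ by Lemma~\ref{lem:stripbounds}, the first integral in \eqref{eq:complete} is nonnegative. The second integral is then at least
\[
 \frac1{200}\int_{\R}s|y|^2\dd .
\]
Recalling $y=a^{-1/2}(u,h)$ and $h=\eps+cu$ from \eqref{eq:h}, this equals the right side of \eqref{eq:stripcoercivity}.
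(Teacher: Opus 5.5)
Your proposal is correct and follows the paper's proof essentially step for step. You use the same diagonal lower bounds ($3s/32,\ s/6$ for $g\le 2/5$ and $3s/50,\ 5s/12$ for $g\ge 2/5$), the same off-diagonal bound $5s/42$, the same two-by-two determinant check against $s/200$, and the same square completion \eqref{eq:complete}. Your thin-margin comparison in the first case does hold exactly, since $6887\cdot 1764=12{,}148{,}668>25\cdot 480000$, so the contingency route you describe is not needed.
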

\begin{proof}
By Lemma~\ref{lem:stripbounds},
\[
 Z_{22}\ge P_2-\frac{s^2}{3},\qquad |Z_{12}|\le\frac5{42}s.
\]
For $0<g\le2/5$, the diagonal lower bounds are therefore $3s/32$ and
$s/6$; for $2/5\le g\le2/3$, they are $3s/50$ and $5s/12$.
Applying the off-diagonal bound to the absolute values of the two
components of a complex vector reduces the assertion to the real matrices
\[
 H_\ell=\begin{pmatrix}3/32&-5/42\\-5/42&1/6\end{pmatrix},\qquad
 H_h=\begin{pmatrix}3/50&-5/42\\-5/42&5/12\end{pmatrix}.
\]
Both $H_\ell-I_2/200$ and $H_h-I_2/200$ have positive diagonal
entries and positive determinants, so they are positive definite.
Hence $Z\ge(s/200)I_2$. The square completion \eqref{eq:complete}
proves the energy inequality.
\end{proof}

\section{Admissibility and exclusion of nonzero frequencies}\label{sec:admissible}
We now justify the boundary conditions used in the energy identity and
prove the nonzero-frequency exclusion used in the two stability theorems.

\begin{lemma}[Vanishing of the energy flux]\label{lem:boundary}
Suppose either $0<\Gamma\le1$, $\nu=\Gamma+1$, or
$(\Gamma,\nu)\in\mathcal T$. Let $\lambda\ne0$ with $\Rea\lambda\ge0$.
An admissible solution has finite energy $E$, all terms
in $Q$ are integrable, and the boundary flux in
Appendix~\ref{app:identity} tends to zero at both ends.
\end{lemma}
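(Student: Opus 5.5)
The plan is to combine exponential decay of admissible solutions at both ends with the rates at which the profile weights degenerate. Two weights matter: $1/a$, which grows like $e^{-x/\gamma}$ as $x\to-\infty$, and $s$, which decays like $e^{-x/\gamma}$ (or $e^{-rx}$ off the Becker line) as $x\to+\infty$. Every term in $E$, in $Q$, and in the boundary flux of Appendix~\ref{app:identity} is a product of two solution factors, taken from $v,u,h$ and up to first derivatives of $u,h$, times a profile coefficient. By \eqref{eq:cd}, \eqref{eq:cderiv}, Lemma~\ref{lem:profile} and Lemma~\ref{lem:slopestrip}, each such coefficient is bounded by a constant times a power of $1/a$ no worse than $1/a$ itself. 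It therefore suffices to show two things: the solution components decay at $-\infty$ faster than $a^{1/2}$, and they decay at $+\infty$ at some exponential rate.

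\emph{Downstream end.} For $\lambda\ne0$ with $\Rea\lambda\ge0$, the outgoing space at $+\infty$ is spanned by $W^+_3,W^+_4,W^+_5$, which are asymptotic to $e^{A_+x}V^+$. The shock is noncharacteristic at $+\infty$, and the stable subspace of $A_+(\lambda)$ stays spectrally separated for $\lambda\ne0$. I would check this with the frozen exponents. The $(v,u)$ block is no longer pressureless downstream, and the only exponent that can approach the imaginary axis does so only as $\lambda\to0$. Consequently $W=O(e^{-\delta x})$ for some $\delta=\delta(\lambda)>0$. Since $a\to a_*>0$ and all coefficients are bounded there, every term is integrable on $[0,\infty)$ and the flux tends to zero.

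\emph{Upstream end.} This is the main obstacle. The incoming space is spanned by the two solutions with exponents $\mu_1=\frac{1+\sqrt{1+4\lambda}}2$ and $\mu_2=\frac{1+\sqrt{1+4\nu\lambda}}{2\nu}$. Both have real part at least $1$ and $1/\nu$ respectively, uniformly on $\Rea\lambda\ge0$. We need $2\Rea\mu_j>1/\gamma$, which makes $|W|^2/a\sim e^{(2\Rea\mu_j-1/\gamma)x}$ integrable at $-\infty$. For $\mu_1$ this is immediate. For $\mu_2$ it requires $2/\nu>1/\gamma$, that is, $\nu<2(\Gamma+1)$. This holds on the Becker line and on $\mathcal T$, since $1+3\Gamma/2<2+2\Gamma$.

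Away from the Becker line, the upstream rate of $a$ is needed. I would obtain it from the linearization at $s=1$ in Lemma~\ref{lem:slopestrip}: $a'/a=d=sz\to z(-\infty)$, which lies in the slope interval. So $a\gtrsim e^{z_-x}$ with $z_-\le\max\{1/(\Gamma+1),1/\nu\}$, and the required inequality becomes $2\Rea\mu_j>z_-$. This holds because $2/\nu>1/\nu$ and $2/\nu>1/(\Gamma+1)$ on $\mathcal T$.

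The neutral transport branch $e^{-\lambda x}$ must also be handled, since it does not decay when $\lambda$ is imaginary. It is excluded from the incoming space by Definition~\ref{def:admissible}. I would verify that the incoming solutions satisfy $W=O(e^{\Rea\mu\, x})$ even though the coefficients converge only exponentially. This follows from the standard gap lemma, using the exponential convergence rate of $A(x,\lambda)\to A_-(\lambda)$ (cf.\ \cite[Lemma 2.3]{HLZ}). One should take care that the rate of coefficient convergence exceeds the spectral gap to the neutral exponent, or else invoke the conjugation lemma with a slightly reduced rate.

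\emph{Conclusion.} With these bounds, the $y$, $y'$, $|v|^2$ and coefficient terms are all $O(e^{\eta x})$ at $-\infty$ and $O(e^{-\eta x})$ at $+\infty$ for some $\eta>0$. This gives finite $E$, integrability of each term of $Q$, and vanishing of the flux, which is a finite sum of such products, at both ends.
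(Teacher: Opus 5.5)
Your proposal is correct and follows essentially the same route as the paper. In both, incoming solutions decay at a rate $\omega<1/\nu$ that still beats half the upstream growth rate of $1/a$; on $\mathcal T$ this uses $\nu<2(\Gamma+1)$, and on the reference line it is automatic. The profile coefficients are bounded, and outgoing modes decay exponentially downstream by the noncharacteristic splitting. The small differences are that the paper gets $a(x)\ge a(0)e^{mx}$ for $x\le0$ directly from the uniform bound $a'/a=sz\le m=\max\{1/\nu,1/(\Gamma+1)\}$, rather than from the limit $z(-\infty)$, which spares you a convergence rate for $z$; and it verifies the downstream absence of imaginary spatial exponents explicitly with a symmetrizer instead of leaving that as a check.
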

\begin{proof}
At the limiting upstream state, the frozen thermal equation is
$\lambda\eps+\eps'=\nu\eps''$, and the momentum equation is
$\lambda u+u'+\Gamma\eps'=u''$. The two incoming spatial exponents are
\begin{equation}\label{eq:incomingroots}
 \mu_u^+=\frac{1+\sqrt{1+4\lambda}}2,\qquad
 \mu_e^+=\frac{1+\sqrt{1+4\nu\lambda}}{2\nu},
\end{equation}
with the square roots chosen to have positive real part. For
$\Rea\lambda\ge0$ their real parts are at least $1$ and $1/\nu$.
The remaining exponents consist of the two opposite parabolic branches
and the transport exponent $-\lambda$. None belongs to the continued
incoming space. In particular, the transport exponent remains excluded
when it lies on the imaginary axis.

Exponential convergence of the coefficients and the asymptotic
conjugation in \cite[Lemma 3.6]{HLZ} imply incoming decay
$O(e^{\omega x})$, including derivatives, for every $0<\omega<1/\nu$.
All the parameters under consideration have $\nu>1$.
On the reference line, $a(x)\sim a_*e^{(x-x_0)/(\Gamma+1)}$ as
$x\to-\infty$, and one may choose
$1/(2\nu)<\omega<1/\nu$.

For the explicit region, put
$m=\max\{1/\nu,1/(\Gamma+1)\}$.
Lemma~\ref{lem:slopestrip} gives $a'/a=sz\le m$, and hence
$a(x)\ge a(0)e^{mx}$ for $x\le0$. Furthermore
$\nu\le1+3\Gamma/2<2(\Gamma+1)$, so $m<2/\nu$.
Choose $m/2<\omega<1/\nu$. The bound
$a^{-1}(x)\le a(0)^{-1}e^{-mx}$ then proves that
$y=(u,h)^T/\sqrt a$ and $y'$ decay exponentially upstream.
The coefficients $c,d$ and their derivatives are bounded by the profile
equations and slope enclosure. Thus every upstream energy term is
integrable and every term of the flux in the appendix vanishes.

At the downstream state, $a\to a_*>0$ and $\hat v\to v_*>0$. For nonzero
$\lambda$ in the closed right half-plane, the outgoing modes decay
exponentially. This follows from the downstream splitting in
\cite[Lemma 3.3]{HLZ}; a direct exclusion of imaginary spatial roots is
also given below. The coefficients in the energy form and flux are
bounded downstream. Thus all downstream integrals converge and the flux
again vanishes.
\end{proof}

For clarity, the downstream splitting does not hide a nonzero imaginary
frequency exception. The frozen equations have the form
\[
 \lambda w+\mathcal A_+w'=B_+w'',\qquad
 \mathcal A_+=\begin{pmatrix}
 1&-1&0\\-a_*/v_*&1&\Gamma/v_*\\0&a_*&1
 \end{pmatrix},\qquad
 B_+=\operatorname{diag}(0,1/v_*,\nu/v_*).
\]
The positive diagonal matrix
\[
 S_+=\operatorname{diag}\left(a_*/v_*,1,\Gamma/(a_*v_*)\right)
\]
symmetrizes $\mathcal A_+$. A putative spatial mode $w=e^{i\xi x}r$ with real
$\xi\ne0$ would therefore satisfy
\[
 (\Rea\lambda)r^*S_+r
 +\xi^2\left(\frac{|r_u|^2}{v_*}
           +\frac{\Gamma\nu|r_\eps|^2}{a_*v_*^2}\right)=0.
\]
Hence $r_u=r_\eps=0$, and the momentum equation forces $r_v=0$.
For $\xi=0$, the condition is $\lambda r=0$, again impossible for a nonzero
mode when $\lambda\ne0$. Thus the downstream frozen system has no
imaginary spatial exponent at the frequencies under consideration.

\begin{proposition}[Exclusion of nonzero frequencies]\label{prop:nonzero}
Suppose either $0<\Gamma\le1$ and $\nu=\Gamma+1$, or
$(\Gamma,\nu)\in\mathcal T$. Then
\[
 D_*(\lambda;\Gamma,\nu)\ne0
 \qquad(\Rea\lambda\ge0,\ \lambda\ne0).
\]
\end{proposition}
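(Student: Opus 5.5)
We argue by contradiction. Suppose $D_*(\lambda;\Gamma,\nu)=0$ for some $\lambda\ne0$ with $\Rea\lambda\ge0$. By the characterization of zeros recalled in Section~\ref{sec:spectral}, the incoming and outgoing spaces then intersect nontrivially, so there is a nontrivial admissible solution $(v,u,\eps)$ of \eqref{eq:ev1}--\eqref{eq:ev3}. We pass to $(v,u,h)$ with $h=\eps+cu$. This change is invertible and triangular, so $(v,u,h)$ is also nontrivial. It satisfies the mass equation together with \eqref{eq:r2} and \eqref{eq:rh}. Lemma~\ref{lem:boundary} applies in both parameter regimes. It gives finite energy $E$, integrability of every term of $Q$, and vanishing of the boundary flux at both ends. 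Hence the energy identity \eqref{eq:energy} holds rigorously: $0=(\Rea\lambda)E+Q$.

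Next we bound $Q$ from below. On the reference line we use Proposition~\ref{prop:coercivity}, and on $\mathcal T$ Proposition~\ref{prop:stripcoercivity}. Both rest on the square completion \eqref{eq:complete} with $G>0$, and they give
\[
Q\ge\kappa\int_\R\frac sa\left(|u|^2+|h|^2\right)\dd
\]
with $\kappa=1/180$ or $1/200$. Since $E\ge0$ and $\Rea\lambda\ge0$, both terms of the identity are nonnegative, so each must vanish. In particular $\int (s/a)(|u|^2+|h|^2)\dd=0$. The weight $s/a$ is strictly positive at every finite $x$, because $0<s<1$ and $a>0$ by Lemmas~\ref{lem:profile} and \ref{lem:slopestrip}. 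The functions are continuous, so $u\equiv0$ and $h\equiv0$, and therefore $\eps\equiv0$ as well.

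The step that needs care is recovering $v\equiv0$ without using $\Rea\lambda>0$, since $\lambda$ may lie on the imaginary axis, where $E$ carries no information. With $u\equiv0$, the mass equation \eqref{eq:ev1} becomes $\lambda v+v'=0$, so $v=Ce^{-\lambda x}$. This is exactly the neutral transport branch. Two routes exclude it. The first is direct: with $u=\eps=0$, the momentum equation \eqref{eq:ev2} reduces to $-(a/\hat v)v'=0$, and since $a>0$ this forces $v'=0$. Then $\lambda v=0$, so $v=0$ because $\lambda\ne0$. The second is to note that $e^{-\lambda x}$ does not decay as $x\to-\infty$ when $\Rea\lambda\ge0$, while admissible solutions decay upstream by Lemma~\ref{lem:boundary}. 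Either way the solution is trivial, which contradicts the assumption. Hence $D_*\ne0$ on $\{\Rea\lambda\ge0\}\setminus\{0\}$.

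The main point to verify is that nothing above secretly uses $\Rea\lambda>0$. That is, the imaginary-axis case, where consistent splitting fails upstream, must be fully covered. This is what the continued incoming space and the flux lemma provide, and the step that recovers $v$ uses only $\lambda\ne0$.
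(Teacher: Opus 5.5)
Your proposal is correct and follows the paper's proof essentially step for step. Lemma~\ref{lem:boundary} justifies $0=(\Rea\lambda)E+Q$, the coercivity bounds force $u=h=0$ and hence $\eps=0$, the momentum equation gives $v'=0$, and the mass equation gives $\lambda v=0$, so $v=0$ because $\lambda\ne0$. Your second route, which excludes $v=Ce^{-\lambda x}$ by upstream decay, is also valid but not needed.
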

\begin{proof}
Lemma~\ref{lem:boundary} justifies the identity
$0=(\Rea\lambda)E+Q$ for every admissible solution. The energy $E$ is
nonnegative. According to the parameter regime, apply
\eqref{eq:coercive} or \eqref{eq:stripcoercivity}. Since $s/a>0$ at
every finite $x$, either estimate forces $u=h=0$, and therefore
$\eps=0$. The momentum equation gives $v'=0$, and the mass equation
gives $\lambda v=0$. As $\lambda\ne0$, the entire solution is trivial.
\end{proof}

\section{Zero frequency and continued outgoing limits}\label{sec:zero}
The continued outgoing condition at zero permits bounded modes, not
only modes decaying at $+\infty$. The following argument handles that
condition directly and does not require $\nu=\Gamma+1$.

\begin{proposition}[The zero-frequency condition]\label{prop:zero}
For every $\Gamma,\nu>0$, one has $D_*(0;\Gamma,\nu)\ne0$.
\end{proposition}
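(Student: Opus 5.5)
The plan is to reduce the zero-frequency problem to the variational equation of the profile. Then we show that the only candidate admissible solution is a multiple of the integrated profile perturbation $\hat U-U_-$. Finally we exclude that candidate with an inviscid (Lopatinski-type) condition at the downstream state.

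\emph{Step 1: reduction to the profile ODE.} At $\lambda=0$, \eqref{eq:ev1} reads $v'=u'$. An admissible solution lies in the continued incoming space, which still decays upstream by Lemma~\ref{lem:boundary}, including at $\lambda=0$. Hence $v=u$. The integrated equations at $\lambda=0$ are exact derivatives: they are $x$-derivatives of the linearization of the once-integrated profile relations, applied to $w=(v',u',\eps')$. Integrating \eqref{eq:ev2}--\eqref{eq:ev3} once from $-\infty$ therefore gives integration constants, and upstream decay forces them to be zero. The result is that $w$ solves the variational equation of \eqref{eq:profile}, a two-dimensional first-order system in $(\delta\hat v,\delta\hat e)$. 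Its coefficients are exactly those appearing after the substitution $\hat v'-\Gamma\hat e=-\hat va$ used in Section~\ref{sec:spectral}.

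\emph{Step 2: identification of the candidate.} At the upstream rest point $(1,0)$, the linearization of \eqref{eq:profile} has eigenvalues $1$ and $1/\nu$. It is thus an unstable node, and upstream decay of $w$ imposes no restriction. At the downstream rest point $(v_*,a_*^2/2)$, the proof of Lemma~\ref{lem:slopestrip} shows a saddle, whose stable direction is spanned by the profile itself. I then use the structure of the continued outgoing space at zero (\cite[Lemmas 3.3, 3.6]{HLZ}), made explicit here. Its elements are combinations of fast parabolic modes, which decay exponentially, and slow modes, whose exponents tend to $0$ with $\lambda$ and which tend to constants $r_j$. Consequently $W=(v,u,\eps)$ stays bounded downstream, and so $w=W'$ cannot contain the exponentially growing saddle direction. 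Hence $w=\alpha\hat U'$, with $\hat U'$ the profile derivative. Integrating from $-\infty$ gives $W=\alpha(\hat U-U_-)$, whose downstream limit is $\alpha(U_+-U_-)$. In the limiting normalization this is $\alpha(v_*-1,\,v_*-1,\,a_*^2/2)$, written in the $(v,u,\eps)$ variables with the correction $\eps=\mathcal E-\hat u u$ accounted for.

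\emph{Step 3: the downstream condition, which I expect to be the main obstacle.} It remains to show that $U_+-U_-$ does not lie in the prescribed subspace spanned by the limits $r_j$ of the slow outgoing modes at $+\infty$. These are eigenvectors of $\mathcal A_+$ (the matrix above, symmetrized by $S_+$) for the characteristic speeds whose slow modes, with exponents $\approx-\lambda/\alpha_j$, decay for $\Rea\lambda>0$, that is, for $\alpha_j>0$. First I will count the modes: $B_+$ has rank two, which gives two fast outgoing modes. The remaining outgoing dimension must therefore come from slow modes, and I will check which sign of the eigenvalues of $\mathcal A_+$ qualifies, using the Lax condition at the noncharacteristic downstream state. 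Then I will form the $3\times3$ determinant $\det(U_+-U_-,r_j,\dots)$ and compute it in closed form in $\Gamma$ alone, since $v_*,a_*$ depend only on $\Gamma$ and $\nu$ does not enter $\mathcal A_+$. I will verify that it is nonzero for all $\Gamma>0$. This is the strong-shock limit of the one-dimensional Lopatinski condition, that is, Majda's inviscid stability for polytropic gases. The delicate points are, first, identifying exactly which slow directions belong to the continued space in the limit $e_-=0$; I will handle this by continuation in $\lambda$ along the positive reals. Second, the nonvanishing of the determinant must survive the degenerate upstream state; only downstream data enter, so I expect it does. With $\alpha=0$ forced, $W\equiv0$, and so $D_*(0;\Gamma,\nu)\ne0$.
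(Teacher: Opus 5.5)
Your outline is the paper's proof: $u=v$, reduction to the variational equation \eqref{eq:variational}, the downstream saddle forcing a multiple of the profile tangent, and an inviscid test at $+\infty$. Your $3\times3$ determinant against the positive-speed eigenvectors of $\mathcal A_+$ is equivalent to the paper's test with the left eigenvector $\ell_-$ for the negative speed $1-c_+$, where $c_+=\sqrt{2(\Gamma+1)/\Gamma}>1$, since $\ell_-$ spans the annihilator of those eigenvectors.

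As written, however, the mode count in Step 3 is wrong, and that count is what fixes the prescribed subspace. The rank of $B_+$ gives two fast modes in total, not two fast outgoing ones. At $\lambda=0$ their exponents are the nonzero roots of $\det(\mathcal A_+-\mu B_+)=0$. This quadratic has leading coefficient $\nu/v_*^2>0$ and value $\det\mathcal A_+=1-c_+^2<0$ at $\mu=0$, so exactly one root is negative. The three-dimensional outgoing space is therefore one fast mode with zero limit plus the two slow modes for the speeds $1$ and $1+c_+$, so the permitted bounded limits fill a plane. Your count would leave a single slow direction. That contradicts your own rule $\alpha_j>0$, and it would test the limit vector against the wrong subspace.

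Two further corrections are needed. First, the closed two-dimensional system is for $(u',\eps'+\hat v'u)$, the internal-energy perturbation, not for $(u',\eps')$. Equation \eqref{eq:ev3} contains the zeroth-order term $(\hat v'-\nu\hat v''/\hat v)u$, which is absorbed only by this combination. With it, \eqref{eq:ev2}--\eqref{eq:ev3} at $\lambda=0$ are already first order in $(w_1,w_2)$, and no further integration is needed.

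Second, in the $(v,u,\eps)$ coordinates in which $\mathcal A_+$ is written, the candidate is $u=v=\alpha(\hat v-1)$ and $\eps=\alpha\bigl(\hat e-(\hat v-1)^2/2\bigr)$. Its downstream limit is $\alpha j_*$ with $j_*=(-a_*,-a_*,0)^T$. Your third component $a_*^2/2$ is the jump in $e$, not in $\eps$. Take the correct vector with the eigenvectors $(\Gamma,0,a_*)^T$ and $(1,-c_+,-a_*)^T$ for the speeds $1$ and $1+c_+$. The determinant is then $-a_*^2(c_++\Gamma+1)\ne0$, which completes your argument.
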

\begin{proof}

Suppose there is an admissible solution at $\lambda=0$. It decays upstream, is bounded downstream, and has derivatives decaying downstream. The first equation gives $u=v$. Set
\[
 w_1=u',\qquad w_2=\eps'+\hat v'u.
\]
Then
\begin{equation}\label{eq:variational}
 \binom{w_1}{w_2}'=
 \begin{pmatrix}
 2\hat v-1&\Gamma\\ \hat va/\nu+\hat e'/\hat v&\hat v/\nu
 \end{pmatrix}\binom{w_1}{w_2}.
\end{equation}
Indeed, substituting $u=v$ in \eqref{eq:ev2} gives
$w_1'=(2\hat v-1)w_1+\Gamma w_2$. Substitution in
\eqref{eq:ev3}, followed by differentiation of $w_2$, gives the second
row. The matrix in \eqref{eq:variational} is the Jacobian of the
profile vector field, and its downstream determinant is negative.
The solution $(w_1,w_2)$ decays downstream, because the derivatives of
an admissible zero-frequency solution decay and $\hat v'\to0$.
The downstream decaying solution space of this two-dimensional
variational equation is one-dimensional. It is spanned by the nonzero
profile tangent $(\hat v',\hat e')$. Integrating and using upstream decay gives,
for a constant $A_0$,
\[
 (w_1,w_2)=A_0(\hat v',\hat e'),\qquad
 u=v=A_0(\hat v-1),\qquad
 \eps=A_0\left(\hat e-\frac{(\hat v-1)^2}{2}\right).
\]
The downstream limit is $A_0j_*$, with $j_*=(-a_*,-a_*,0)^T$.

The downstream first-order flux matrix and sound speed are
\[
 \mathcal A_+=\begin{pmatrix}
 1&-1&0\\ -a_*/v_*&1&\Gamma/v_*\\0&a_*&1
 \end{pmatrix},\qquad c_+^2=\frac{2(\Gamma+1)}\Gamma>1.
\]
The characteristic speeds are $1-c_+$, $1$, and $1+c_+$. Continuing the outgoing subspace from positive real $\lambda$ selects bounded limits in the positive-characteristic subspace, since a slow spatial exponent satisfies $\mu=-\lambda/\alpha+O(\lambda^2)$ for characteristic speed $\alpha$. The additional fast outgoing mode has zero limit.

The negative-speed left covector
\[
 \ell_-^T=(a_*/v_*,\ c_+,\ -\Gamma/v_*)
\]
annihilates every permitted bounded outgoing limit, but
\[
 \ell_-^Tj_*=-a_*\left(\frac{a_*}{v_*}+c_+\right)<0.
\]
Hence $A_0=0$, and the solution is trivial. There is no nontrivial
intersection of the continued Evans spaces at zero.
\end{proof}

Propositions~\ref{prop:nonzero} and~\ref{prop:zero} prove
Theorems~\ref{thm:line} and~\ref{thm:transport}.

\section{Transport perturbations and sufficiently strong finite shocks}\label{sec:finite}
We use the parameter dependence of the profile and spectral equations to
pass from the limiting profiles of Theorems~\ref{thm:line}
and~\ref{thm:transport} to sufficiently strong finite shocks. Write
\begin{equation}\label{eq:varsigma}
 \varsigma=v_+-v_*(\Gamma),\qquad
 \mathcal D(\lambda;\Gamma,\nu,\varsigma)
 =D(\lambda;\Gamma,\nu,v_*(\Gamma)+\varsigma).
\end{equation}
The use of $\varsigma$ places the strong-shock limit at the fixed parameter value
$\varsigma=0$.

\begin{lemma}[Uniform high-frequency exclusion]\label{lem:highfrequency}
Let $\mathcal K$ be a compact subset of $\{\Gamma>0,\nu>0\}$.
There are $\varsigma_0>0$ and $\Lambda<\infty$ such that
\[
 \mathcal D(\lambda;\Gamma,\nu,\varsigma)\ne0
 \quad\text{if }(\Gamma,\nu)\in\mathcal K,\quad
 0\le\varsigma\le\varsigma_0,\quad
 \Rea\lambda\ge0,\quad |\lambda|>\Lambda.
\]
The amplitude bound can be chosen so that $v_*(\Gamma)+\varsigma<1$
throughout this parameter set.
\end{lemma}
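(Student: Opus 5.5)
The plan is to obtain Lemma~\ref{lem:highfrequency} from the uniform high-frequency bound of \cite[Proposition 4.1]{HLZ}, adding only a compactness argument over the parameters. The exclusion is not proved by any estimate new to this paper. That proposition gives a bound of the form: for $\Gamma,\nu$ in a compact set and $v_+$ in a compact subset of $[v_*,1)$, every eigenvalue of the integrated operator with $\Rea\lambda\ge0$ satisfies $|\lambda|\le\Lambda$, with $\Lambda$ depending only on these parameter bounds. The proof there is a high-frequency energy estimate of Kawashima type applied to the integrated equations. Its constants depend continuously on the profile coefficients, on upper and lower bounds for $\hat v$, on bounds for $\hat v'$ and $\hat e'$, and on $\Gamma$ and $\nu$.

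First, fix the amplitude range. The map $\Gamma\mapsto v_*(\Gamma)=\Gamma/(\Gamma+2)$ is continuous. On the compact projection of $\mathcal K$ to the $\Gamma$-axis it attains a maximum $\bar v<1$. Choose $\varsigma_0=(1-\bar v)/2$. Then $v_*(\Gamma)+\varsigma\le(1+\bar v)/2<1$ throughout the parameter set. This proves the last assertion and keeps $v_+$ in a region where \cite{HLZ} applies uniformly.

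Second, check that the hypotheses of \cite[Proposition 4.1]{HLZ} hold uniformly on the compact set $\mathcal K\times[0,\varsigma_0]$. By \cite[Section 2.5]{HLZ}, the profiles vary continuously up to and including $\varsigma=0$. Since $\hat v$ lies between $v_*(\Gamma)$ and $1$, it is bounded below by $\min v_*>0$. The profile derivatives are bounded because they are polynomial in $(\hat v,\hat e,e_-)$ through \eqref{eq:finiteprofile1}--\eqref{eq:finiteprofile2}, with $e_-\ge0$ bounded. Hence the constants in the high-frequency estimate can be taken uniform, which gives a single $\Lambda$.

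Third, use the characterization of zeros. On $\Rea\lambda\ge0$, $\lambda\ne0$, zeros of $\mathcal D$ coincide with eigenvalues of the integrated operator (Section~\ref{sec:spectral}, \cite[Proposition 3.8]{HLZ}). Therefore $|\lambda|>\Lambda$ forces $\mathcal D\ne0$. At $\varsigma=0$ consistent splitting fails on the imaginary axis upstream. In that case "eigenvalue" must be read through the continued incoming and outgoing spaces, using the admissible-solution notion of Definition~\ref{def:admissible}. The high-frequency estimate of \cite{HLZ} is already formulated for these continued solutions, since the incoming modes decay at rates $\Rea\mu_u^+,\Rea\mu_e^+$ bounded below.

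The main obstacle is this uniformity at the degenerate endpoint $\varsigma=0$, where $e_-=0$ and the genuine coupling condition is lost. The fallback, if \cite{HLZ} did not state the bound uniformly through $\varsigma=0$, is a contradiction argument. Suppose there are sequences $(\Gamma_j,\nu_j,\varsigma_j)$ and zeros $\lambda_j$ with $|\lambda_j|\to\infty$. After the rescaling $x\mapsto|\lambda_j|^{1/2}x$, the leading parabolic blocks give a frozen problem. In it, mass is damped by $\lambda v$, and momentum and heat are damped by diffusion with coefficients bounded below by $1/\hat v\ge1$ and $\nu/\hat v$. This rules out large zeros uniformly, and I expect this to be exactly the mechanism of \cite[Proposition 4.1]{HLZ}.
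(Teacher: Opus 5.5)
This is essentially the paper's proof: your $\varsigma_0=(1-\bar v)/2$ is the paper's choice $\varsigma_0<\min_{\mathcal K}a_*$. The rest is the same uniform bound on the profile coefficients, which lets a single radius from \cite[Proposition 4.1]{HLZ} serve the whole compact parameter set, including $\varsigma=0$.

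One correction concerns how you describe that proposition. It is not a Kawashima-type energy estimate, and such an estimate would have constants degenerating with the loss of genuine coupling at $e_-=0$. It is a parabolic-separation argument: the gap is of order $|\lambda|^{1/2}$, and a sufficient-radius inequality is stated in the coefficient norms. It covers $\varsigma=0$ because it uses transversality of the incoming parabolic modes to the neutral transport branch, not decay of that branch. Your fallback, not your primary description, therefore gives the correct reason the endpoint is included.
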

\begin{proof}
Choose $\varsigma_0<\min_{\mathcal K}a_*(\Gamma)$.
The profile bounds and exponential convergence of
\cite[Lemma 2.3]{HLZ} are uniform on the resulting compact parameter
set. In particular, $\hat v$ is bounded away from zero, and the profile and
finitely many derivatives entering the spectral coefficients are bounded.

Proposition~4.1 of \cite{HLZ} and its proof provide high-frequency
exclusion up to the strong-shock limit. The leading parabolic separation
has order $|\lambda|^{1/2}$, with bounded remainder coefficients. More
explicitly, the coefficient norms in the sufficient radius inequality
\cite[(4.1)--(4.2)]{HLZ} are uniformly bounded here. Thus a single
sufficiently large radius satisfies that inequality on the entire compact
parameter set. In the strong-shock limit, the argument uses transversality of
the incoming parabolic modes to the neutral hyperbolic branch, rather than
decay of the latter. It therefore also includes $\varsigma=0$.
\end{proof}

\begin{proof}[Proof of Theorem~\ref{thm:finite}]
Choose $r>0$ so that
$\mathcal K=\{(\Gamma,\nu):\Gamma\in I,\ |\nu-(\Gamma+1)|\le r\}$
is a compact subset of the positive quadrant. Use
$\varsigma_0,\Lambda$ supplied by Lemma~\ref{lem:highfrequency} for this
set, and put
\[
 K_\Lambda=\{\lambda\in\C:\Rea\lambda\ge0,\ |\lambda|\le\Lambda\}.
\]
Proposition~3.8 and Remark~3.9 of \cite{HLZ} give joint continuity of
$\mathcal D$ through $\varsigma=0$, with continuous choices of the
incoming and outgoing bases, including at $\lambda=0$. By
Theorem~\ref{thm:line} and compactness,
\begin{equation}\label{eq:positiveMinimum}
 m:=\min_{\Gamma\in I,\,\lambda\in K_\Lambda}
 |D_*(\lambda;\Gamma,\Gamma+1)|>0.
\end{equation}
Uniform continuity on the compact parameter neighborhood then yields
$0<\epsilon_I\le r$ and $0<\delta_I\le\varsigma_0$ such that
\[
 \bigl|\mathcal D(\lambda;\Gamma,\nu,\varsigma)
       -D_*(\lambda;\Gamma,\Gamma+1)\bigr|<m/2
\]
for $\lambda\in K_\Lambda$ and the parameters in
\eqref{eq:parameterband}. Thus $\mathcal D$ is nonzero on
$K_\Lambda$. Lemma~\ref{lem:highfrequency} excludes zeros outside that
set, proving the theorem.
\end{proof}

\begin{proof}[Proof of Corollary~\ref{cor:transportfinite}]
Set
\[
 \mathcal K_I=\{(\Gamma,\nu):\Gamma\in I,\quad
                 1+\Gamma/2\le\nu\le1+3\Gamma/2\}.
\]
This is compact and lies in the positive quadrant. Use the common radius
$\Lambda$ from Lemma~\ref{lem:highfrequency}. Theorem~\ref{thm:transport}
and compactness give
\[
 \min_{(\Gamma,\nu)\in\mathcal K_I,\ \lambda\in K_\Lambda}
 |D_*(\lambda;\Gamma,\nu)|>0.
\]
Joint continuity of the integrated Evans function as the shock strength tends to infinity then gives a single $\delta_I$ with $0<\delta_I\le\varsigma_0$ for which no zero appears on
$K_\Lambda$ when $0\le\varsigma<\delta_I$. The common high-frequency
bound excludes zeros outside that set. This proves the assertion uniformly
on the entire explicit band.
\end{proof}

The shock-strength assertion can also be expressed in terms of the
upstream Mach number. In the present normalization, its relation to
$\varsigma$ is \cite[Appendix D]{HLZ}
\begin{equation}\label{eq:Mach}
 M^2=\frac{2}{(\Gamma+2)\varsigma}.
\end{equation}
Consequently, for every compact $I\subset(0,1]$, there is a finite
$M_I$ such that the physical shocks with $\Gamma\in I$,
$|\nu-(\Gamma+1)|<\epsilon_I$, and $M>M_I$ satisfy the integrated
Evans stability condition. The same conclusion holds on the explicit transport band of
Corollary~\ref{cor:transportfinite}, with $I\subset(0,2/3]$.
These Mach thresholds are existential, just as $\delta_I$ is.

\section{Formal verification}\label{sec:formal}
The analytical arguments of this paper have been checked in the Lean~4
proof assistant \cite{Lean4}, using only the Mathlib library \cite{Mathlib}.
The formalization is provided as supplementary material, together with
instructions for rebuilding it, and is archived at Zenodo \cite{HumLean}.
It covers the cancellation of the thermal
mass coupling in Section~\ref{sec:thermal}, the pointwise energy identity
of Appendix~\ref{app:identity}, the explicit limiting profile of
Lemma~\ref{lem:profile}, the elimination of the internal energy and the
invariant slope interval of Lemma~\ref{lem:slopestrip}, the scalar bounds
and coercivity estimates of Sections~\ref{sec:coercivity}
and~\ref{sec:bandestimates}, the vanishing of the energy flux and the
exclusion of nonzero frequencies in Section~\ref{sec:admissible}, the
zero-frequency argument of Section~\ref{sec:zero}, and the compactness
argument of Section~\ref{sec:finite}. The checked statements depend only
on the standard axioms of Lean and Mathlib, and they contain no unproved
steps.

Results that lie outside the present scope of Mathlib enter the
formalization as explicit hypotheses of the theorems, not as assumed
facts. These are the existence of the limiting profile together with its
endstate limits, the exponential decay of admissible solutions and the
description of the continued outgoing space at zero frequency, the saddle
structure of the downstream rest point for the variational equation
\eqref{eq:variational}, and the continuity, high-frequency bounds, and
characterization of zeros of the integrated Evans function established in
\cite{HLZ}. With these inputs, Theorems~\ref{thm:line}
and~\ref{thm:transport} are verified both in the form that no nontrivial
admissible solution exists and in the form of the Evans condition, and
Corollary~\ref{cor:transportfinite} and Theorem~\ref{thm:finite} follow.
The explicit profile of Lemma~\ref{lem:profile} is shown to satisfy the
profile hypotheses, so these hypotheses are not vacuous.

The formal proofs of the scalar inequalities in Lemmas~\ref{lem:bounds}
and~\ref{lem:stripbounds} follow a different route from the monotonicity
reductions given in the text, although they establish the same bounds.
After the substitutions $\nu=1+\Gamma k$ and
$z=1/(1+\Gamma(1+(k-1)\tau))$ with $0\le\tau\le1$, each inequality
becomes the nonnegativity of a polynomial on a box. This is verified
exactly from an expansion in the Bernstein basis with nonnegative
coefficients, and a separate identity in each case relates the polynomial
to the energy coefficients defined in Section~\ref{sec:energy}. For the
zeroth-order coupling on the explicit region, the formalization also
verifies the decomposition \eqref{eq:Cquadratic} with the coefficients
\eqref{eq:calA} and \eqref{eq:calB}.

\appendix
\section{Pointwise identity and derivation}\label{app:identity}
Let $R_1=\lambda v+v'-u'$ and let $R_2,R_h$ be the residuals in \eqref{eq:r2}--\eqref{eq:rh}. Define
\begin{align*}
 \mathcal F={}&\frac12|v|^2-\Rea(v\bar u)
 -\Rea(y_1'\bar y_1)-\frac\nu\tht\Rea(y_2'\bar y_2)
 -K\Rea(y_1'\bar y_2)\\
 &+\frac{\hat v-\Gamma c-d}{2}|y_1|^2
 +\frac{\hat v+\Gamma c-\nu d}{2\tht}|y_2|^2
 +\Gamma\Rea(y_1\bar y_2).
\end{align*}
If $E_{\rm dens}=|v|^2+\hat v|y_1|^2+(\hat v/\tht)|y_2|^2$ and $Q_{\rm dens}$ is the integrand in \eqref{eq:Q}, then
\begin{equation}\label{eq:pointwise}
 \Rea\left(R_1\bar v+\frac{\hat v}aR_2\bar u
                    +\frac{\hat v}{a\tht}R_h\bar h\right)
 =(\Rea\lambda)E_{\rm dens}+Q_{\rm dens}+\mathcal F'.
\end{equation}
This identity uses the limiting profile relations, but does not assume that the spectral residuals vanish. Integrating it on a finite interval gives the energy balance with the two endpoint fluxes displayed explicitly.

Here is a direct derivation of its nontrivial coefficients. Since
\[
 u'=\sqrt a\,(y_1'+dy_1/2),\qquad
 u''=\sqrt a\,\bigl(y_1''+dy_1'+(d'/2+d^2/4)y_1\bigr),
\]
and similarly for $h$, the diagonal diffusions, after integration by parts, contribute $|y_1'|^2-d^2|y_1|^2/4$ and $(\nu/\tht)(|y_2'|^2-d^2|y_2|^2/4)$. Momentum convection and its zeroth-order term give
\[
 -\frac{d^2}{4}+\left(\Gamma-\frac12\right)\hat v'
 -\frac\Gamma2c'+\frac{(\hat v-\Gamma c)d}{2}.
\]
Using $\hat v'=-ad$, $\hat v-s=\Gamma a/2$, and \eqref{eq:cderiv} reduces this expression to \eqref{eq:P1gen}. Thermal convection gives \eqref{eq:P2gen}.

The cross diffusion is
\[
 -K\Rea\left([y_1''+dy_1'+(d'/2+d^2/4)y_1]\bar y_2\right).
\]
One integration by parts leaves $K\Rea(y_1'\bar y_2')$,
$(K'-dK)\Rea(y_1'\bar y_2)$, and
$-K(d'/2+d^2/4)\Rea(y_1\bar y_2)$. The thermal $u'$ term contributes $B\Rea(y_1'\bar y_2)+(Bd/2)\Rea(y_1\bar y_2)$. The momentum $h'$ term contributes $-\Gamma\Rea(y_1'\bar y_2)+(\Gamma d/2)\Rea(y_1\bar y_2)$ after integration by parts. Finally, the thermal zeroth-order term contributes $L\Rea(y_1\bar y_2)$. These are precisely \eqref{eq:Rgen}--\eqref{eq:Cgen}, while the displayed boundary terms assemble into $\mathcal F$.

For general transport parameters, the following forms avoid second profile derivatives:
\begin{align*}
 \tht R={}&(1+\nu)c'-(1-\nu)dc+\hat va+\nu ad-\Gamma c^2-\Gamma\tht,\\
 \tht C={}&\frac d2\bigl[\Gamma\tht-\hat va+\nu ad-\Gamma c(a+c)\bigr]
 +\frac{(\nu-1)c}{4}(d^2-2d').
\end{align*}
Together with \eqref{eq:cderiv}, these give a first-derivative description of the entire energy form.

\section{Continuation of the limiting Evans function}\label{sec:continuation}
The main text proves limiting stability directly by energy estimates
and a zero-frequency argument. For completeness, this appendix records
an alternative Evans-function continuation principle. It isolates the
only possible boundary crossing along a compact parameter path.

\begin{proposition}[Evans continuation along a parameter path]\label{prop:homotopy}
Let $p:[0,1]\to(0,\infty)^2$ be continuous. Suppose $D_*(\lambda;p(0))$
is nonzero on the closed right half-plane, and
\[
 D_*(i\tau;p(t))\ne0\qquad(t\in[0,1],\ \tau\in\R\setminus\{0\}).
\]
Then $D_*(\lambda;p(t))$ is nonzero for all $t\in[0,1]$ and
$\Rea\lambda\ge0$.
\end{proposition}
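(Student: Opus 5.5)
The plan is a standard argument-principle/continuation argument along the path, combining the high-frequency exclusion, joint continuity of $D_*$, and the zero-frequency result. Set $T=\{t\in[0,1]: D_*(\lambda;p(t))\ne0 \text{ for all } \Rea\lambda\ge0\}$. By hypothesis $0\in T$. I will show $T$ is open and closed in $[0,1]$, so $T=[0,1]$.

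First, the uniform confinement. The image $\mathcal K=p([0,1])$ is a compact subset of the positive quadrant. Lemma~\ref{lem:highfrequency}, applied at $\varsigma=0$, supplies $\Lambda$ such that $D_*(\lambda;p(t))\ne0$ for $\Rea\lambda\ge0$, $|\lambda|>\Lambda$, uniformly in $t$. Proposition~\ref{prop:zero} gives $D_*(0;p(t))\ne0$ for every $t$, and the hypothesis excludes the rest of the imaginary axis. Hence, on the compact set
\[
 \partial_\Lambda=\{i\tau:|\tau|\le\Lambda\}\cup\{\Lambda e^{i\phi}:|\phi|\le\pi/2\},
\]
which bounds the half-disk $K_\Lambda$, the function $D_*(\cdot;p(t))$ never vanishes for any $t\in[0,1]$. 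By joint continuity of $D_*$ in $(\lambda,\Gamma,\nu)$ (\cite[Proposition 3.8]{HLZ}, with continuous basis choices), there is $m>0$ with $|D_*|\ge m$ on $\partial_\Lambda\times[0,1]$.

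Next, count zeros in the interior of $K_\Lambda$. There $D_*(\cdot;p(t))$ is analytic, and it is continuous up to the boundary. The number of zeros inside is
\[
 N(t)=\frac1{2\pi i}\oint_{\partial_\Lambda}\frac{D_*'}{D_*}\,d\lambda,
\]
or equivalently the winding number of $D_*(\partial_\Lambda;p(t))$ about $0$. Because $D_*$ is uniformly continuous and bounded below by $m$ on $\partial_\Lambda\times[0,1]$, nearby $t$ give boundary curves homotopic in $\C\setminus\{0\}$. A Rouché-type comparison, $|D_*(\cdot;p(t))-D_*(\cdot;p(t'))|<m$ on $\partial_\Lambda$, then shows $N$ is locally constant, hence constant. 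Since $N(0)=0$, we get $N(t)=0$ for all $t$. Together with the nonvanishing on $\partial_\Lambda$ and outside $K_\Lambda$, this gives the claim.

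The delicate step is the use of analyticity up to the boundary. $D_*$ is known to be analytic on the open half-plane and only continuous, in $\lambda$, up to $i\R$ at the limit. The winding-number formulation avoids differentiating on the boundary. If needed, I would instead shrink the contour slightly to $\Rea\lambda=\delta$ and use the uniform lower bound $m$ together with uniform continuity to keep $|D_*|\ge m/2$ on the shifted contour for small $\delta$, uniformly in $t$. The argument principle then applies on a region where analyticity is genuine. A second point to check is that the high-frequency radius $\Lambda$ and the continuity are uniform along the path; this is exactly what compactness of $p([0,1])$ and Lemma~\ref{lem:highfrequency} provide.
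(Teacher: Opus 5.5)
Your proposal is correct and follows essentially the same route as the paper: a uniform high-frequency radius from compactness of the path, nonvanishing on the imaginary segment from Proposition~\ref{prop:zero} together with the hypothesis, and constancy in $t$ of the winding number (the zero count) on the boundary of a large right half-disk, which is zero at $t=0$. One small point: Lemma~\ref{lem:highfrequency} only excludes $|\lambda|>\Lambda$, so take the half-disk radius slightly larger than $\Lambda$ so that the arc itself is covered; this is what the paper's ``sufficiently large right half-disk'' does.
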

\begin{proof}
The image of the path is a compact subset of the positive parameter
quadrant. The uniform high-frequency argument in
Lemma~\ref{lem:highfrequency} supplies a common radius $\Lambda$ beyond which
no nonstable Evans zero is possible. Proposition~\ref{prop:zero} and
continuity exclude a common neighborhood of $\lambda=0$ along the path.
On the remaining imaginary boundary the Evans function is nonzero by
hypothesis. With bases chosen continuously along the path and analytically
in $\lambda$, the winding number on the boundary of a sufficiently large
right half-disk is therefore a continuous integer-valued function of $t$.
It is the number of zeros in the open right half-disk, counted with
multiplicity, and is zero at $t=0$. It remains zero for all $t$.
Together with the boundary exclusions, this proves the result.
\end{proof}

\begin{remark}[Application to the transport region]
For $(\Gamma,\nu)\in\mathcal T$, the path
\[
 p(t)=\bigl(\Gamma,\Gamma+1+t[\nu-(\Gamma+1)]\bigr)
\]
remains in $\mathcal T$ and begins on the reference line.
Theorem~\ref{thm:line} gives stability at $t=0$, and
Proposition~\ref{prop:nonzero} excludes every nonzero imaginary zero
along the path. Proposition~\ref{prop:homotopy} therefore provides an
alternative proof of Theorem~\ref{thm:transport}. Its hypotheses require
less than the full nonzero-frequency exclusion established here.
\end{remark}

\section*{Acknowledgments}
ChatGPT (OpenAI) and Claude (Anthropic) contributed substantially to this
work, ChatGPT to the development of the energy estimates and the
exposition, and Claude to the Lean formalization and the exposition. The
author takes responsibility for all content. \FundingStatement

\medskip
\noindent\textbf{Competing interests.} \ConflictStatement

\medskip
\noindent\textbf{Code availability.}
No research datasets are associated with this article. The Lean
formalization of Section~\ref{sec:formal}, the program that generates its
polynomial expansions, and a notebook that rebuilds and checks the
formalization are provided as supplementary material\CodeRepository.

\end{document}